\newtheorem{theorem}{Theorem}[section]
\newtheorem{lemma}[theorem]{Lemma}
\newtheorem{proposition}[theorem]{Proposition}
\newtheorem{corollary}[theorem]{Corollary} 
\theoremstyle{definition}
\newtheorem{definition}[theorem]{Definition}
\newtheorem*{claim}{Claim}
\theoremstyle{remark}
\newtheorem{remark}[theorem]{Remark}
\newtheorem{example}[theorem]{Example}
\DeclareMathOperator{\FR}{FR}
\DeclareMathOperator{\FS}{FS}
\DeclareMathOperator{\MP}{MP}
\keywords{Ramsey algebra, Hindman's Theorem, Ramsey space, orderly composition}
\subjclass[2000]{Primary 05A17; Secondary 05D10}
\begin{document}

\title{Ramsey Algebras}
\author{Wen Chean Teh}
\address{School of Mathematical Sciences\\
Universiti Sains Malaysia\\
11800 USM, Malaysia}
\email{dasmenteh@usm.my}

\begin{abstract}
Hindman's theorem says that every finite coloring of the natural numbers has a monochromatic set of finite sums. 
Ramsey algebras are  structures that satisfy an analogue of Hindman's theorem. 
This paper introduces Ramsey algebras and presents some elementary results. 
Furthemore, their connection to Ramsey spaces will be addressed.
\end{abstract}

\maketitle
\section{Introduction}

The set of natural numbers $\{0,1,2,\dotsc\}$ is denoted by $\omega$. Suppose $\langle x_i \rangle_{i \in \omega}$ is a sequence of natural numbers. Let $\FS(\langle x_i \rangle_{i \in \omega} )$ denote the set  $\{\, \sum_{i \in F} x_i \mid F \text{ is a finite nonempty subset of } \omega\,\}$. Hindman's Theorem \cite{nH74} says that
for every finite partition of the set of positive integers $\mathbb{N}=X_1\cupdot X_2\cupdot \dotsb \cupdot X_N$, there exists a sequence $\langle x_i \rangle_{i \in \omega} $ of positive natural numbers such that $\FS(\langle x_i \rangle_{i \in \omega} )\subseteq X_j$ for some $1\leq j \leq N$. 
In fact, such a sequence $\langle x_i \rangle_{i \in \omega} $ can be chosen to be a ``sum subsystem" of any given sequence $\langle y_i \rangle_{i \in \omega} $ of natural numbers.


In this paper, a class of algebraic structures called Ramsey algebras will be introduced\footnote{Ramsey algebra is also introduced by this author concurrently in \cite{wcT13a} using the notations and terminology commonly used in mathematical logic.}, each of which possesses the property analogous to that possesed by the semigroup $(\mathbb{N}, +)$ as in Hindman's Theorem. Some basic results of the theory of Ramsey algebras will be presented. In particular, a characterization of finite Ramsey algebras, as well as that of infinite Ramsey algebras involving only unary operations, is obtained.  For the remaining of this section, we give a historical account and motivation for Ramsey algebras. 


In 1988 Carlson \cite{tC88} presented an abstract version of Ellentuck's Theorem~\cite{eE74}.
Structures that have properties analogous to those of the Ellentuck space were introduced as Ramsey spaces.
Certain spaces of infinite sequences of multivariable words, each endowed with an analogous Ellentuck topology, were studied.
The single variable type of such spaces over a finite alphabet was found to be Ramsey (Theorem~2 in \cite{tC88}). This result, called Carlson's Theorem in \cite[XVIII,~\S4]{HS98}, has as corollaries many earlier Ramsey theoretic results including Hindman's Theorem, Ellentuck's Theorem, the dual Ellentuck Theorem~\cite{CS84}, the Galvin-Prikry Theorem \cite{GP73} and the Hales-Jewett Theorem \cite{HJ63}. Since then there has been an active study on Ramsey spaces (see \cite{sT10}). 

Spaces studied by Carlson are induced by some algebras, for example, algebras of variable words. His abstract Ellentuck's Theorem gives a sufficient condition for such a space to be Ramsey in terms of some combinatorial properties that boil down to the Ramsey property of the underlying algebra. Ramsey algebras can thus be thought as the combinatorial counterpart of such Ramsey spaces. 
This connection was observed by Carlson and the notion of Ramsey algebra was proposed by him.
This relation between  Ramsey algebras and Ramsey spaces will be addressed in section~6.

Hindman's Theorem implies that every semigroup is a Ramsey algebra. However, those interesting Ramsey algebras of variable words are not semigroups. The collection of operations in each of these algebras is finite but can be arbitrarily large depending on the size of the underlying finite alphabet. In view of no infinite integral domain---involving two associative binary operations---is a Ramsey algebra (Theorem~\ref{0814}), the nice interplay among the operations in a Ramsey algebra of variable words seems to be mysterious. A key feature in Carlson's proof is the construction of certain ultrafilters ``idempotent" for every operation in the corresponding algebra of variable words. These ultrafilters in turn allow the construction of ``homogeneous" sequences, establishing that the algebra is Ramsey. This approach generalizes Galvin-Glazer proof (see \cite{wC77} or \cite{nH79}) of Hindman's Theorem.

\section{Preliminaries}

The set of integers is denoted by $\mathbb{Z}$.

To us an algebra is a pair $(A, \mathcal{F})$, where $A$ is a nonempty set and $\mathcal{F}$ is a (possibly empty) collection of operations on $A$, none of which is nullary. 
If $\mathcal{F}$ is finite, we will write $(A, f_1, \dotsc, f_n)$ instead of $(A, \{f_1, \dotsc,f_n\})$. 
To say that  $a\in A$  is an \emph{idempotent} element for the algebra  $(A,\mathcal{F})$ means that $f(a,\dotsc,a)=a$ for every $f \in \mathcal{F}$.
If $B$ is a nonempty subset of $A$ and $B$ is closed under $f$ in the usual sense  for each $f\in \mathcal{F}$, then the algebra $(B, \{\,f\!\upharpoonright \! B \mid  f \in \mathcal{F}\,\})$ is called a subalgebra\footnote{Note that $f \!\upharpoonright \! B$ and $g \!\upharpoonright \! B$ can be equal when $f$ and $g$ are distinct. Our notions of ``algebra'' and ``subalgebra'' are different from but compatible with that in the theory of universal algebra.}  of $(A, \mathcal{F})$, where $f\!\upharpoonright \! B$ is the restriction of $f$ to $B^n$ with codomain $B$, provided $f$ is $n$-ary. 



The set of infinite and finite sequences  in $A$ are denoted by ${^\omega}\!A$ and ${^{<\omega}}\!A$ res\-pectively.
Suppose $\vec{a}$ is an infinite sequence $\langle a_0, a_1,a_2,\dotsc\rangle$. For every $n \geq 1$,  let $\vec{a}\!\upharpoonright \!  n$ denote the initial segment of $\vec{a}$ of length $n$ and let $\vec{a}-n$ denote the cut-off sequence $\langle a_n, a_{n+1},a_{n+2},\dotsc\rangle$. 
If $\vec{b}$ is a finite sequence $\langle b_0, b_1,\dotsc,b_{n-1}\rangle$, then $\vert \vec{b}\vert$ is its length and the concatenation $\vec{b}\ast \vec{a}$ is $\langle b_0, b_1,\dotsc,b_{n-1}, a_0, a_1,a_2,\dotsc\rangle$. Regardless of finite or infinite, $\vec{b}(i)$ is the $(i+1)$-th term of a sequence $\vec{b}$.

A preorder on a set $A$ is a binary relation on $A$ that is reflexive and transitive. 

A set in a topological space has the property of Baire if and only if its symmetric difference with some open set is meager.

If $f$ is an operation on a set $A$ and $\vec{a}$ is a finite sequence in $A$, 
whenever we write $f(\vec{a})$, it is implicitly assumed that the length of $\vec{a}$ equals the arity of $f$ and $f(\vec{a})$ stands for $f(\vec{a}(0), \dotsc,\vec{a}(\vert \vec{a}\vert -1))$.


\section{Ramsey Algebras}

A few terminologies are needed before we can introduce Ramsey algebra.
The following is a special type of composition of operations, to the author's knowledge, introduced by Carlson in  \cite{tC88}.
In fact, Carlson's definition is more general because it is defined for any heterogenoeus algebra, that is, an indexed collection of distinct nonempty sets with a collection of operations on it. 

\begin{definition}\label{0601a}
Suppose $(A,\mathcal{F})$ is an algebra, $m\in \mathbb{N}$, and $f\colon A^m \rightarrow A$. We say that $f$  is an \emph{orderly composition} of $\mathcal{F}$ if{f}
there exist $n \in \mathbb{N}$, $k_1,k_2, \dotsc, k_n\in \mathbb{N}$, and $g,h_1,h_2, \dotsc, h_n\in \mathcal{F}$ such that
\begin{enumerate}
\item $g \colon  A^n \rightarrow A$;
\item $h_i \colon  A^{k_i} \rightarrow A$ for $i=1,2,\dots,n$;
\item $m=\sum_{i=1}^n k_i$; and
\item if $\vec{x}_i\in {^{<\omega}}\!A$ and $\vert \vec{x}_i\vert=k_i$ for $i=1,2,\dotsc, n$ and $\vec{x}=\vec{x}_1\ast \vec{x}_2\ast \dotsb \ast \vec{x}_n$, then
$$f(\vec{x})=g(h_1(\vec{x}_1), h_2(\vec{x}_2), \dotsc, h_n(\vec{x}_n)).$$
\end{enumerate}
We say that $\mathcal{F}$ is \emph{closed under orderly composition} if{f} $f \in \mathcal{F}$ whenever $f$ is an orderly composition of $\mathcal{F}$. The collection of \emph{orderly terms} over $\mathcal{F}$ is  the smallest collection of operations on $A$ that includes $\mathcal{F}$, contains the identity function\footnote{In \cite{tC88} this containment of the identity function is mistakenly omitted. However, all the collections $\mathcal{F}$ encountered later in the paper do contain the identity function.} on $A$ and is closed under orderly composition.
\end{definition}


\begin{example}
Consider the integral domain $(\mathbb{Z},+,\times)$, where $+$ and $\times$ are the usual addition and multiplication. Suppose $g$ and $h$ are ternary operations on $\mathbb{Z}$ defined by $g(x,y,z)=xy+z$ and $h(x,y,z)=xz+y$ respectively. Then $g$ is an orderly term over $\{+,\times\}$ but $h$ is not.
\end{example}


Suppose $(A,\mathcal{F})$ is an algebra.
A few words will come handy on how to show that some property $P$ holds for every orderly term over $\mathcal{F}$. We can prove by induction on the generation of the orderly terms over $\mathcal{F}$. First, we show that the identity operation on $A$, as well as every operation in  $\mathcal{F}$, satisfies property $P$. 
For the induction step, assuming $h_1, \dotsc, h_n$ are orderly terms over $\mathcal{F}$ satisfying property $P$ and $g\in \mathcal{F}$ is $n$-ary, we need to show that
$f$ as defined in Definition~\ref{0601a}(4) also satisfies property $P$.


\begin{definition}\label{0524c}
Suppose $(A,\mathcal{F})$ is an algebra and $\vec{a}, \vec{b}$ are infinite sequences in $A$.
We say $\vec{b}$ is a \emph{reduction} of $\vec{a}$ with respect to $\mathcal{F}$, and write $\vec{b} \leq_{\mathcal{F}} \vec{a}$ if{f} there are finite sequences $\vec{a}_k$ and orderly terms $f_k$ over $\mathcal{F}$ for all
$k \in \omega$ such that $\vec{a}_0 \ast \vec{a}_1 \ast \vec{a}_2 \ast \dotsb$ is a subsequence of $\vec{a}$ and $\vec{b}(k)=f_k(\vec{a}_k)$ for all $k \in \omega$. 
\end{definition}

It is easy to check that $\leq_{\mathcal{F}}$ is a preorder on the collection of infinite sequences in $A$.

\begin{definition}
Suppose $(A,\mathcal{F})$ is an algebra and $\vec{b}$ is an infinite sequence in $A$. A \emph{finite reduction} of $\vec{b}$ with respect to $\mathcal{F}$ is an element of the form $f(\vec{b}_0)$ for some orderly term $f$ over  $\mathcal{F}$  and    some finite subsequence $\vec{b}_0$ of $\vec{b}$. The set of all finite reductions of $\vec{b}$ with respect to $\mathcal{F}$ is denoted by $\FR_{\mathcal{F}}(\vec{b})$.
\end{definition}

Equivalently, $\FR_{\mathcal{F}}(\vec{b})$ is equal to the set $\{\, \vec{c}(0) \mid \vec{c} \leq_{\mathcal{F}} \vec{b}\,\}$.

\begin{remark}\label{3001a}
Our definition of $\leq_{\mathcal{F}}$ is equivalent to a special case of the one given in \cite{tC88}, where the collection of operations contains all projections. 
Our choice is driven by Hindman's Theorem: $\FR_{\{+\}}(\vec{b})=\FS(\vec{b})$ for each $\vec{b} \in  {^\omega}{\mathbb{N}}$.
\end{remark}

\begin{definition}
Suppose $(A,\mathcal{F})$ is an algebra. We say that $(A, \mathcal{F})$ is a \emph{Ramsey algebra} if{f}
for every $\vec{a}\in {^\omega}\!A$ and $X \subseteq A$, there exists $\vec{b} \leq_{\mathcal{F}} \vec{a}$ such that $\vec{b}$ is \emph{homogeneous} for $X$ (with respect to $\mathcal{F}$), meaning that $\FR_{\mathcal{F}}(\vec{b})$ is either contained in or disjoint from $X$. 
\end{definition}

\begin{remark}\label{0602a}
Suppose $(A,\mathcal{F})$ is a Ramsey algebra. Since $\leq_{\mathcal{F}}$ is transitive, it can be easily deduced by induction that
 for every $\vec{a}\in {^\omega}\!A$ and finite coloring (partition) $A=X_1\cupdot X_2\cupdot \dotsb \cupdot X_N$, 
there exists $\vec{b} \leq_{\mathcal{F}} \vec{a}$ such that $\FR_{\mathcal{F}}(\vec{b} )$ is monochromatic, meaning that 
$\FR_{\mathcal{F}}(\vec{b} )\subseteq X_j$ for some $1\leq j \leq N$. 
\end{remark}

\begin{example}\label{2601a}
For any set $A$, the empty algebra $(A,\emptyset)$ is Ramsey. To see this, note that the identity function on $A$ is the only orderly term. Hence, 
$\FR_{\emptyset} (\vec{b})=\{ \,\vec{b}(i)\mid i\in \omega\,\}$ and $\vec{b}\leq_{\emptyset} \vec{a}$ simply means that $\vec{b}$ is a subsequence of $\vec{a}$. Fix $\vec{a}\in {^\omega} \!A$ and  $X\subseteq A$. Either $\{\, i\mid \vec{a}(i)\in X\,\}$ or $\{\, i\mid \vec{a}(i)\in X^c\,\}$ (or both) is infinite. Let $i_0,i_1,i_2, \dotsc$ be an increasing enumeration of the elements of whichever of these two sets that is infinite. Then
$\langle \vec{a}(i_k)\rangle_{k\in \omega}  \leq_{\emptyset}  \vec{a}$ and $\FR_{\emptyset}( \langle \vec{a}(i_k)\rangle_{k\in \omega}   )
= \{\, \vec{a}(i_k) \mid k\in \omega\,        \}$ is either contained in or disjoint from $X$ by our choice of enumeration.
\end{example}

Suppose $(A, \mathcal{F})$ is an algebra such that for every $\vec{a} \in {^\omega}\!A$, there exists  $\vec{b} \leq_{\mathcal{F}} \vec{a}$ such that  $\vert \FR_{\mathcal{F}}(\vec{b}) \vert=1$. Then $(A, \mathcal{F})$ is trivially Ramsey, and we say that it is a \emph{degenerate Ramsey algebra}. 
Note that if $ \FR_{\mathcal{F}}(\vec{b}) =\{e\}$, then $\vec{b}=  \langle e,e,e \dotsc \rangle$ and hence $e$ is an idempotent element for the algebra $(A,\mathcal{F})$.

The following characterization of finite Ramsey algebras is an unpublished observation by Carlson.  

\begin{theorem}\label{0808}
Suppose $(A, \mathcal{F})$ is a finite algebra. The following are equivalent.
\begin{enumerate}
\item $(A, \mathcal{F})$ is a Ramsey algebra.
\item For every $a\in A$, the set $\FR_{\mathcal{F}}(\langle a,a, a,\dotsc \rangle)$ contains an idempotent element for $(A,\mathcal{F})$.
\item $(A, \mathcal{F})$ is a degenerate Ramsey algebra.
\item Every subalgebra of $(A,\mathcal{F})$ contains an idempotent element for $(A,\mathcal{F})$.
\end{enumerate}
\end{theorem}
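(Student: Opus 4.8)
The plan is to prove the cycle of implications $(3)\Rightarrow(1)\Rightarrow(2)\Rightarrow(4)\Rightarrow(3)$, using finiteness crucially in the last two steps. The implication $(3)\Rightarrow(1)$ is immediate from the definition of degenerate Ramsey algebra, as already observed in the text. The implication $(1)\Rightarrow(2)$ is also easy: given $a\in A$, apply the Ramsey property to the constant sequence $\vec a=\langle a,a,a,\dotsc\rangle$ and the set $X=\FR_{\mathcal F}(\vec a)$. We obtain $\vec b\leq_{\mathcal F}\vec a$ with $\FR_{\mathcal F}(\vec b)$ contained in or disjoint from $X$; since $\vec b\leq_{\mathcal F}\vec a$ forces $\vec b(0)\in\FR_{\mathcal F}(\vec a)=X$, the ``disjoint'' alternative is impossible, so $\FR_{\mathcal F}(\vec b)\subseteq X$. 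Now here is the first place finiteness enters, though mildly: since $A$ is finite, the sequence $\vec b$ takes some value $e\in A$ infinitely often, and the constant subsequence $\langle e,e,e,\dotsc\rangle$ is a reduction of $\vec b$ (using the identity orderly term), hence $e\in\FR_{\mathcal F}(\vec b)\subseteq X$ and $\{e\}=\FR_{\mathcal F}(\langle e,e,\dotsc\rangle)\subseteq\FR_{\mathcal F}(\vec b)$; as noted after the definition of degenerate algebra, any element $e$ with $\FR_{\mathcal F}(\langle e,e,\dotsc\rangle)=\{e\}$ is idempotent. Wait---this needs care, because $\FR_{\mathcal F}(\vec b)\subseteq X=\FR_{\mathcal F}(\vec a)$ does not literally say $e$ is idempotent. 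Instead I would argue directly on $e$: I want to show $\FR_{\mathcal F}(\langle a,a,\dotsc\rangle)$ contains \emph{an} idempotent, and for that I should iterate---but better, I will handle idempotency in the step $(2)\Rightarrow(4)$ is false in that direction; let me reorganize.

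A cleaner route: prove $(1)\Rightarrow(4)\Rightarrow(2)\Rightarrow(3)\Rightarrow(1)$. For $(1)\Rightarrow(4)$: let $B$ be a subalgebra, pick $a\in B$, apply the Ramsey property to $\vec a=\langle a,a,\dotsc\rangle$; every finite reduction of $\vec a$ lies in $B$ since $B$ is closed under all $f\in\mathcal F$ hence under all orderly terms. So any homogeneous $\vec b\leq_{\mathcal F}\vec a$ has all terms in $B$, and the key claim is that among finite algebras the Ramsey property upgrades to a \emph{canonical} statement forcing an idempotent into $\FR_{\mathcal F}(\vec b)$. Concretely: colour $A$ by singletons is too fine; instead use the standard trick. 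Given $\vec b$ with $A$ finite, there is a value hit infinitely often; passing to that constant subsequence $\vec c=\langle e,e,\dotsc\rangle\leq_{\mathcal F}\vec b$, every element of $\FR_{\mathcal F}(\vec c)$ is obtained by applying an orderly term to a tuple of $e$'s. I then need: $\FR_{\mathcal F}(\langle e,e,\dotsc\rangle)$ contains an idempotent. This is exactly statement $(2)$ for the element $e$, so really the engine is proving, for finite $A$: \textbf{if $(A,\mathcal F)$ is Ramsey then for every $a$, $\FR_{\mathcal F}(\langle a,\dotsc\rangle)$ contains an idempotent.} I would prove this by a pigeonhole/descent argument: the set $S_0=\FR_{\mathcal F}(\langle a,\dotsc\rangle)$ is a finite subset of $A$ closed under orderly terms (it is the subalgebra generated by $a$ together with... actually $\FR$ of a constant sequence \emph{is} the carrier of the subalgebra generated by $a$). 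Since $A$ is finite, among elements of $S_0$ choose one, say $e$, of \emph{minimal} subalgebra $\langle e\rangle\subseteq S_0$; apply the Ramsey property to $\langle e,e,\dotsc\rangle$ and $X=\langle e\rangle\setminus\{e\}$ to conclude $\langle e\rangle=\{e\}$, whence $f(e,\dotsc,e)\in\langle e\rangle=\{e\}$ for all $f\in\mathcal F$, i.e. $e$ is idempotent, and $e\in S_0$.

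That descent argument is the crux and deserves spelling out: order the elements of $S_0$ by $x\preceq y$ iff $\langle x\rangle\subseteq\langle y\rangle$ (the subalgebra generated by $x$ is contained in that generated by $y$); since $S_0$ is finite, pick $e\in S_0$ minimal under $\preceq$. If $\langle e\rangle\neq\{e\}$, apply $(1)$ to the constant sequence $\langle e,e,\dotsc\rangle$ with $X=\langle e\rangle\setminus\{e\}$: the homogeneous $\vec b\leq_{\mathcal F}\langle e,e,\dotsc\rangle$ has $\FR_{\mathcal F}(\vec b)\subseteq\langle e\rangle$ and $\vec b(0)\in\langle e\rangle$. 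Taking a constantly-$e'$ subsequence of $\vec b$ (finiteness again), $e'\in\FR_{\mathcal F}(\vec b)$ and $\langle e'\rangle\subseteq\FR_{\mathcal F}(\vec b)$. If $\FR_{\mathcal F}(\vec b)\subseteq X$ then $e'\neq e$ yet $\langle e'\rangle\subseteq\langle e\rangle$ with $e\notin\langle e'\rangle$, so $\langle e'\rangle\subsetneq\langle e\rangle$, contradicting minimality of $e$; if $\FR_{\mathcal F}(\vec b)$ is disjoint from $X=\langle e\rangle\setminus\{e\}$ then $\FR_{\mathcal F}(\vec b)\subseteq\{e\}$, forcing $\langle e\rangle=\{e\}$, contradiction. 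Hence $\langle e\rangle=\{e\}$, so $e$ is idempotent and lies in $S_0=\FR_{\mathcal F}(\langle a,a,\dotsc\rangle)$, proving $(2)$. Then $(2)\Rightarrow(3)$: given any $\vec a\in{}^\omega\!A$, pass to a constant subsequence $\langle a,a,\dotsc\rangle\leq_{\mathcal F}\vec a$ (finiteness), let $e\in\FR_{\mathcal F}(\langle a,\dotsc\rangle)$ be idempotent by $(2)$, and observe $\langle e,e,\dotsc\rangle\leq_{\mathcal F}\langle a,a,\dotsc\rangle\leq_{\mathcal F}\vec a$ with $\FR_{\mathcal F}(\langle e,e,\dotsc\rangle)=\{e\}$, so the algebra is degenerate. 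Finally $(4)\Rightarrow(2)$ is trivial since $\FR_{\mathcal F}(\langle a,a,\dotsc\rangle)$ is the carrier of a subalgebra, and $(3)\Rightarrow(1)$ was noted already; combined with $(1)\Rightarrow(2)$ (which the above descent argument in fact establishes directly from $(1)$), all four are equivalent.

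The main obstacle is the descent step, i.e. correctly extracting an idempotent from the Ramsey property on constant sequences; everything else is bookkeeping with the observations that (a) over a finite carrier every sequence has a constant subsequence that is a reduction of it, and (b) $\FR_{\mathcal F}$ of a constant sequence equals the underlying set of the generated subalgebra and is therefore closed under orderly terms. I would state (a) and (b) as a preliminary remark before the proof to keep the argument clean.
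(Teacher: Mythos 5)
Your final argument is correct, but the crucial implication is handled by a genuinely different device than the paper's. (Your opening paragraph is a false start that you rightly abandon; I am judging the reorganized ``cleaner route.'') The paper proves $(1)\Rightarrow(2)$ in one stroke: since $A$ is finite, colour every element with its own colour, invoke the multi-colour form of the Ramsey property (Remark~\ref{0602a}, obtained by induction from transitivity of $\leq_{\mathcal F}$) on $\langle a,a,\dotsc\rangle$, and conclude that a monochromatic $\FR_{\mathcal F}(\vec b)$ is a singleton $\{e\}$, whence $e$ is idempotent and $e=\vec b(0)\in\FR_{\mathcal F}(\langle a,a,\dotsc\rangle)$. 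You instead stay with the two-colour definition and run a descent: pick $e$ in the generated subalgebra $S_0=\FR_{\mathcal F}(\langle a,\dotsc\rangle)$ with $\langle e\rangle$ minimal, colour by $X=\langle e\rangle\setminus\{e\}$, extract a constant subsequence $\langle e',e',\dotsc\rangle$ of the homogeneous reduction, and derive a contradiction with minimality in one case and with $\langle e\rangle\neq\{e\}$ in the other. I checked the details (that $\FR_{\mathcal F}$ of a constant sequence is the carrier of the generated subalgebra, that constant subsequences over a finite carrier are reductions via the identity orderly term, that $e'\in S_0$ so minimality applies) and they all go through. What the paper's approach buys is brevity, at the price of using the $N$-colour upgrade; what yours buys is that only the raw two-set definition of Ramsey algebra is used, at the price of the auxiliary preorder $\preceq$ and a case split. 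Your remaining implications --- $(2)\Rightarrow(3)$ via passing to a constant subsequence and then to $\langle e,e,\dotsc\rangle$, $(3)\Rightarrow(1)$ immediate, and $(4)\Leftrightarrow(2)$ from the identification of $\FR_{\mathcal F}(\langle a,a,\dotsc\rangle)$ with the subalgebra generated by $a$ --- coincide with the paper's.
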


\begin{proof}
($\textit{1} \Rightarrow \textit{2}$) Fix  any element $a\in A$. Since the algebra is finite, there is a finite coloring of $A$ such that every element of $A$ gets a unique color. Since $(A,\mathcal{F})$ is a Ramsey algebra, there exists $\vec{b} \leq_{\mathcal{F}} \langle a,a,a, \dotsc \rangle$ such that $\FR_{\mathcal{F}}(\vec{b})$ is monochromatic. By the choice of our coloring, $\FR_{\mathcal{F}}(\vec{b})$ must consist of a single element, say $e$. Then $e$ is an idempotent element
for $(A,\mathcal{F})$. Furthermore, $e=\vec{b}(0)\in \FR_{\mathcal{F}}(\langle a,a,a, \dotsc \rangle)$.

($\textit{2} \Rightarrow \textit{3}$) Fix $\vec{a} \in {^\omega}\!A$. Since $A$ is finite, by going to a subsequence, we may assume that the sequence $\vec{a}$ is $\langle a,a,a,\dotsc \rangle$ for some $a \in A$. Choose an idempotent element $e$ for $(A,\mathcal{F})$ from $\FR_{\mathcal{F}}(\langle a,a,a, \dotsc \rangle)$. Take $\vec{b}$ to be the sequence $\langle e,e,e,\dotsc \rangle$. Then $\vec{b}\leq_{\mathcal{F}} \vec{a}$ and $\FR_{\mathcal{F}}(\vec{b})=\{e\}$.

($\textit{3} \Rightarrow \textit{1}$) Immediate.

($\textit{2} \Leftrightarrow \textit{4}$) This follows immediately from the observation that for any $a\in A$,  
$\FR_{\mathcal{F}}(\langle a,a,a, \dotsc \rangle)$ is the underlying set of the smallest subalgebra of $(A,\mathcal{F})$ containing (generated by) $a$.
\end{proof}

The following (see \cite[V,~\S2]{HS98}) is a consequence of Hindman's Theorem. 

\begin{theorem}\label{050613a}
Every semigroup is a Ramsey algebra. 
\end{theorem}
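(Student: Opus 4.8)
The plan is to reduce the statement to Hindman's Theorem. Let $(S, \cdot)$ be a semigroup, regarded as the algebra $(S, \{\cdot\})$. The key point is to understand $\FR_{\{\cdot\}}(\vec{b})$ for an infinite sequence $\vec{b}$ in $S$. First I would verify that every orderly term over $\{\cdot\}$ acts, on inputs of the right length, as an iterated product that respects the order of its arguments: by induction on the generation of orderly terms (using associativity of $\cdot$ at the induction step), any $n$-ary orderly term $f$ satisfies $f(x_0,\dotsc,x_{n-1}) = x_0 \cdot x_1 \cdots x_{n-1}$. Hence, writing $\langle x_i\rangle$ for the natural multiplicative version of finite sums, $\FR_{\{\cdot\}}(\vec{b})$ is precisely the set of all finite products $b_{i_1} b_{i_2} \cdots b_{i_k}$ with $i_1 < i_2 < \dots < i_k$ drawn from $\omega$ — the multiplicative analogue of $\FS$. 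Likewise, $\vec{b} \leq_{\{\cdot\}} \vec{a}$ means exactly that $\vec{b}$ is a ``product subsystem'' of $\vec{a}$: the blocks $\vec{a}_k$ partition a subsequence of $\vec{a}$ and each $\vec{b}(k)$ is the product of the entries of $\vec{a}_k$ in order.

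Next I would invoke the semigroup version of Hindman's Theorem (the Galvin--Glazer / Hindman result as stated, e.g., in \cite[V,~\S2]{HS98}): for every sequence $\langle a_i\rangle_{i\in\omega}$ in a semigroup $S$ and every finite coloring of $S$, there is a product subsystem $\langle b_k\rangle_{k\in\omega}$ such that the set of finite ordered products of the $b_k$ is monochromatic. Given $\vec{a} \in {}^\omega S$ and $X \subseteq S$, apply this with the two-coloring of $S$ by $X$ and its complement $X^c$; the resulting product subsystem $\vec{b}$ satisfies $\vec{b} \leq_{\{\cdot\}} \vec{a}$ (by the identification above) and $\FR_{\{\cdot\}}(\vec{b})$ is monochromatic, i.e. either contained in $X$ or disjoint from $X$. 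That is exactly the assertion that $\vec{b}$ is homogeneous for $X$, so $(S, \cdot)$ is a Ramsey algebra.

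The only real content beyond citing Hindman's Theorem is the bookkeeping that identifies $\FR_{\{\cdot\}}$ and $\leq_{\{\cdot\}}$ with finite ordered products and product subsystems; this is where associativity is used and where one must be careful that orderly composition never permutes arguments, so the "ordered" nature of the product subsystem is preserved. I expect this translation step — making precise, by induction on orderly terms, that orderly composition over a single associative binary operation yields nothing more than order-preserving iterated multiplication — to be the main (though routine) obstacle; once it is in place, the theorem is immediate from Hindman's Theorem as recalled in Remark~\ref{3001a} for the special case $S = \mathbb{N}$, $\cdot = +$.
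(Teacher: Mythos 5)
Your proposal is correct and matches the paper's intent exactly: the paper offers no proof of Theorem~\ref{050613a}, simply citing the semigroup form of Hindman's Theorem in \cite[V,~\S2]{HS98}, and your argument supplies precisely the routine translation (by induction on orderly terms, using associativity) identifying $\FR_{\{\cdot\}}$ with finite ordered products and $\leq_{\{\cdot\}}$ with product subsystems. Nothing further is needed.
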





It is easy to show that every subalgebra of a Ramsey algebra is Ramsey. However, if $(A, \mathcal{F})$ is a  Ramsey algebra and $\mathcal{F}'\subseteq \mathcal{F}$, then $(A, \mathcal{F}')$ need not be a  Ramsey algebra. An example will be given after Theorem~\ref{1001b}.
On the other hand, if $(A, \mathcal{F})$ and $(A,\mathcal{G})$ are Ramsey algebras, then $(A, \mathcal{F}\cup \mathcal{G})$ need not be Ramsey as well. To see this, consider the algebra $(\mathbb{Z}, +, \times)$. Being an infinite integral domain, it is not Ramsey, by Theorem~\ref{0814}. However, both the semigroups $(\mathbb{Z},+)$ and $(\mathbb{Z}, \times)$ are Ramsey.

The following localized version of Ramsey algebras will be needed.

\begin{definition}
Suppose $(A,\mathcal{F})$ is an algebra and $\vec{a}$ is an infinite sequence in $A$. We say that $(A, \mathcal{F})$ is \emph{Ramsey below $\vec{a}$} if{f}
for every $\vec{b} \leq_{\mathcal{F}}\vec{a}$ and  $X \subseteq A$, there exists $\vec{c} \leq_{\mathcal{F}} \vec{b}$ homogeneous for $X$. 
\end{definition}

\begin{remark}\label{0102a}
An algebra $(A, \mathcal{F})$ is Ramsey if and only if it is Ramsey below $\vec{a}$ for every $\vec{a}\in {^\omega}\!A$. 
\end{remark}

\section{Unary Operations}

In this section, we will show that if we begin with a Ramsey algebra for which none of the operations is unary  and expand it by adding unary operations,
then the expanded algebra is Ramsey if and only if every sequence has a reduction with a certain fixed point property. 
As a corollary, a classification of Ramsey algebras for which the collection of operations are all unary is obtained.

\begin{lemma}[Katet\v{o}v \cite{mK67}]\label{0925a}
Suppose $A$ is a set and $T \colon A \rightarrow A$ is a unary operation without fixed points. Then there is a partition $A=A_1 \cupdot A_2 \cupdot A_3$ such that $A_i \cap T[A_i]=\emptyset$ for $i=1,2,3$, where $T[X]=\{\,T(x) \mid x\in X   \,\} $.
\end{lemma}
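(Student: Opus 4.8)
The plan is to reduce to the case where $T$ has no periodic points by first "absorbing" the finite orbits, then to handle the periodic-free case by a greedy construction along orbits. More precisely, I would first consider the graph $G$ on vertex set $A$ with an edge between $x$ and $T(x)$ for every $x$ (a directed graph, but I view it as undirected for coloring purposes). A partition $A=A_1\cupdot A_2\cupdot A_3$ with $A_i\cap T[A_i]=\emptyset$ for each $i$ is exactly a proper $3$-coloring of $G$: the condition $A_i\cap T[A_i]=\emptyset$ says no vertex $x$ and its image $T(x)$ lie in the same class. So the lemma amounts to showing that the "functional graph" of a fixed-point-free $T$ is $3$-chromatic. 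Since $T$ has no fixed points, $G$ has no loops, so locally (on any finite piece) one might hope for $2$ colors, but cycles of odd length force $3$.

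The key steps, in order: (1) Observe that each connected component of $G$ is a "rho-shaped" or tree-like structure — it consists of at most one cycle (possibly absent) with trees hanging off it, because $T$ is a function (out-degree $1$). (2) If a component contains a cycle, that cycle has length $\ge 2$ (no fixed points); color the cycle with $2$ colors if its length is even, and with $3$ colors if it is odd (using the third color exactly once on the cycle). (3) Extend the coloring outward along the trees: since each non-cycle vertex $x$ has its unique "forward" neighbor $T(x)$ already colored when we process vertices in order of distance to the cycle, and all its "backward" neighbors are processed later, we only ever need to avoid one color, so $2$ colors suffice for the trees — in particular the global $3$-coloring works. (4) For components with no cycle (these are trees, or rather "backward trees" with an infinite forward ray), pick any vertex as a root and $2$-color by parity of distance, checking that adjacent vertices differ; again avoid the third color or use it freely — $3$ colors is more than enough. (5) Finally, apply the axiom of choice to select a coloring on each component independently and take the union; set $A_i$ to be the $i$-th color class. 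Then verify $A_i\cap T[A_i]=\emptyset$: if $x\in A_i$ and $T(x)\in A_i$ then $x$ and $T(x)$ got the same color, contradicting properness.

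The main obstacle is step (2)–(3): making the informal "color the cycle, then propagate along the trees" argument rigorous when components can be infinite (infinitely many trees hanging off a cycle, or infinitely deep trees, or an infinite forward ray with no cycle at all). The clean way to do this is to define, on each component, the coloring by transfinite/ordinary recursion on the "level" function $\ell(x)=$ distance from $x$ to the nearest cycle vertex (or, in the acyclic case, pick a basepoint and one-sided distance), noting $\ell$ is well-defined into $\omega$ because following $T$ repeatedly from any $x$ eventually reaches a cycle or the chosen ray. One colors level $0$ (the cycle) first as above, then level $n+1$ using only the constraint from the unique level-$n$ forward neighbor. I expect the bookkeeping for acyclic components — where there is no canonical "level $0$" — to require the small extra care of fixing a reference point, but this is routine once the cyclic case is in hand. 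No deep idea beyond "functional graphs are $3$-colorable" is needed; the content is entirely in organizing the recursion.
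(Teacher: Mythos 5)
The paper does not actually prove this lemma---it is quoted from Kat\v{e}tov \cite{mK67} and used as a black box in the proof of Theorem~\ref{0928b}---so there is no in-paper argument to compare yours against; I can only assess your proposal on its own terms. On those terms it is correct and is the standard proof: the condition $A_i\cap T[A_i]=\emptyset$ for all $i$ is precisely a proper $3$-colouring of the undirected graph with edge set $\{\{x,T(x)\}\mid x\in A\}$ (loop-free because $T$ has no fixed points), and your component-by-component colouring, combined with the axiom of choice to select a colouring of each component, does the job.

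Two points that you wave at but should be made explicit, since they are where the ``functional'' structure is actually used. First, in a component containing a (necessarily unique) directed cycle $C$, every vertex $x$ of the component satisfies $T^{m}(x)\in C$ for some $m$: connectivity gives $T^{m}(x)=T^{n}(c)$ for some $c\in C$, and $T^{n}(c)\in C$. This is what makes your level function $\ell$ total, and it also shows (by the same intersection argument applied to two cycles) that a component has at most one cycle. Second, for the acyclic components you invoke ``$2$-colour by parity of distance,'' which presupposes that such a component is bipartite; the clean justification is that \emph{every} undirected cycle in a functional graph is in fact a directed cycle of $T$ (on a $k$-cycle, each vertex has out-degree $1$ and the $k$ edges must be realised by the $k$ vertices, forcing one of the two coherent orientations), so an acyclic component contains no undirected cycles at all and is a tree. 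With those two observations supplied, the recursion on levels (cycle or chosen ray at level $0$, each level-$(n{+}1)$ vertex constrained only by its already-coloured forward image at level $n$) is airtight, and no step fails even for infinite components.
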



\begin{theorem}\label{0928b}
Suppose $\mathcal{F}$ is a collection of unary operations on a set $A$ and $\mathcal{G}$ is a collection of non-unary operations on $A$.
Let 
$$S=\{\,a \in A\mid f(a)=a \text{ \textnormal{for all} } f \in \mathcal{F}\,\}.$$
 Then the following are equivalent.
 \begin{enumerate}
\item $(A, \mathcal{F} \cup \mathcal{G})$ is a Ramsey algebra.
\item For every $\vec{a} \in {^\omega}\!A$, there exists $\vec{b} \leq_{\mathcal{F} \cup \mathcal{G}} \vec{a}$ with
$\FR_{ \mathcal{G}}(\vec{b})\subseteq S$ such that $(A, \mathcal{G})$ is Ramsey below $\vec{b}$.
\item For every $\vec{a} \in {^\omega}\!A$, there exists $\vec{b} \leq_{\mathcal{F} \cup \mathcal{G}} \vec{a}$ with
$\FR_{ \mathcal{G}}(\vec{b})\subseteq S$ such that for every $X\subseteq A$, there exists $\vec{c} \leq_{\mathcal{G}} \vec{b}$ homogeneous for $X$ with respect to $\mathcal{G}$.
\end{enumerate}
\end{theorem}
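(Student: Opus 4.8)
The plan is to run the cycle (1)$\Rightarrow$(2)$\Rightarrow$(3)$\Rightarrow$(1). The implication (2)$\Rightarrow$(3) is immediate: if $(A,\mathcal{G})$ is Ramsey below some $\vec{b}$ with $\FR_{\mathcal{G}}(\vec{b})\subseteq S$, then specializing the defining property of ``Ramsey below $\vec{b}$'' to the sequence $\vec{b}$ itself (which is $\leq_{\mathcal{G}}\vec{b}$ by reflexivity) gives exactly the conclusion of (3). The real work is in the other two implications, and both hinge on one observation: once a sequence $\vec{b}$ satisfies $\FR_{\mathcal{G}}(\vec{b})\subseteq S$, the unary operations of $\mathcal{F}$ become invisible below $\vec{b}$.

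\emph{A collapsing lemma.} I would prove, by induction on the generation of orderly terms over $\mathcal{F}\cup\mathcal{G}$, the following: if $\FR_{\mathcal{G}}(\vec{b})\subseteq S$, then for every orderly term $t$ over $\mathcal{F}\cup\mathcal{G}$ and every finite subsequence $\vec{x}$ of $\vec{b}$ of length the arity of $t$, there are an orderly term $s$ over $\mathcal{G}$ and a subsequence $\vec{y}$ of $\vec{x}$ with $t(\vec{x})=s(\vec{y})$, and moreover $t(\vec{x})\in S$. The base cases (identity, a single $f\in\mathcal{F}$, a single $g\in\mathcal{G}$) are immediate from $\FR_{\mathcal{G}}(\vec{b})\subseteq S$ and the fact that each $f\in\mathcal{F}$ fixes $S$ pointwise. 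In the inductive step $t(\vec{x})=g(h_1(\vec{x}_1),\dots,h_n(\vec{x}_n))$ with $\vec{x}=\vec{x}_1\ast\dots\ast\vec{x}_n$: if $g\in\mathcal{G}$, the inductive hypotheses write each $h_i(\vec{x}_i)$ as $s_i(\vec{y}_i)$ with $s_i$ an orderly term over $\mathcal{G}$ and $\vec{y}_i$ a subsequence of $\vec{x}_i$, so $t(\vec{x})=\big(g(s_1,\dots,s_n)\big)(\vec{y}_1\ast\dots\ast\vec{y}_n)$, and $g(s_1,\dots,s_n)$ is again an orderly term over $\mathcal{G}$; if $g\in\mathcal{F}$, then $n=1$, $h_1(\vec{x}_1)\in S$ by induction, and $g$ fixes it, so $t(\vec{x})=h_1(\vec{x}_1)=s_1(\vec{y}_1)$. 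Three consequences fall out for any $\vec{b}$ with $\FR_{\mathcal{G}}(\vec{b})\subseteq S$: (i) $\FR_{\mathcal{F}\cup\mathcal{G}}(\vec{b})=\FR_{\mathcal{G}}(\vec{b})$; (ii) for every $\vec{c}$, $\vec{c}\leq_{\mathcal{F}\cup\mathcal{G}}\vec{b}$ iff $\vec{c}\leq_{\mathcal{G}}\vec{b}$ (the blockwise $\vec{y}_i$'s produced above concatenate to a subsequence of $\vec{b}$); (iii) since every $\vec{c}\leq_{\mathcal{G}}\vec{b}$ again satisfies $\FR_{\mathcal{G}}(\vec{c})\subseteq S$, applying (i) to $\vec{c}$ shows that such a $\vec{c}$ is homogeneous for a set $X$ with respect to $\mathcal{G}$ exactly when it is homogeneous for $X$ with respect to $\mathcal{F}\cup\mathcal{G}$.

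Granting these, and granting the claim below that a suitable $\vec{b}$ exists, both directions close. For (1)$\Rightarrow$(2): given $\vec{a}$, pick $\vec{b}\leq_{\mathcal{F}\cup\mathcal{G}}\vec{a}$ with $\FR_{\mathcal{F}\cup\mathcal{G}}(\vec{b})\subseteq S$, so in particular $\FR_{\mathcal{G}}(\vec{b})\subseteq S$; to see $(A,\mathcal{G})$ is Ramsey below $\vec{b}$, let $\vec{b}'\leq_{\mathcal{G}}\vec{b}$ and $X\subseteq A$, note $\vec{b}'\leq_{\mathcal{F}\cup\mathcal{G}}\vec{a}$, so Ramseyness of $(A,\mathcal{F}\cup\mathcal{G})$ gives $\vec{c}\leq_{\mathcal{F}\cup\mathcal{G}}\vec{b}'$ homogeneous for $X$, and the collapsing lemma applied to $\vec{b}'$ turns this into $\vec{c}\leq_{\mathcal{G}}\vec{b}'$ homogeneous for $X$ with respect to $\mathcal{G}$. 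For (3)$\Rightarrow$(1): given $\vec{a}$ and $X$, (3) supplies $\vec{b}\leq_{\mathcal{F}\cup\mathcal{G}}\vec{a}$ with $\FR_{\mathcal{G}}(\vec{b})\subseteq S$ together with $\vec{c}\leq_{\mathcal{G}}\vec{b}$ homogeneous for $X$ with respect to $\mathcal{G}$; by the collapsing lemma $\vec{c}$ is homogeneous for $X$ with respect to $\mathcal{F}\cup\mathcal{G}$, and $\vec{c}\leq_{\mathcal{G}}\vec{b}\leq_{\mathcal{F}\cup\mathcal{G}}\vec{a}$ gives $\vec{c}\leq_{\mathcal{F}\cup\mathcal{G}}\vec{a}$ by transitivity, so $(A,\mathcal{F}\cup\mathcal{G})$ is Ramsey.

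\emph{The remaining claim, and the main obstacle.} It remains to show: if $(A,\mathcal{F}\cup\mathcal{G})$ is Ramsey, then every $\vec{a}$ has a reduction $\vec{b}$ with $\FR_{\mathcal{F}\cup\mathcal{G}}(\vec{b})\subseteq S$. This is where Lemma~\ref{0925a} enters, and it is the crux. Fix $f\in\mathcal{F}$ and put $X_f=\{a\in A\mid f(a)=a\}$. By Ramseyness take $\vec{b}\leq_{\mathcal{F}\cup\mathcal{G}}\vec{a}$ homogeneous for $X_f$; the set $Z:=\FR_{\mathcal{F}\cup\mathcal{G}}(\vec{b})$ is closed under $f$, since $f\circ t$ is an orderly term over $\mathcal{F}\cup\mathcal{G}$ whenever $t$ is. If $Z$ were disjoint from $X_f$, then $f\!\upharpoonright\! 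Z\colon Z\to Z$ would be fixed-point free, so Lemma~\ref{0925a} gives $Z=Z_1\cupdot Z_2\cupdot Z_3$ with $Z_i\cap f[Z_i]=\emptyset$; applying Ramseyness to this finite colouring (Remark~\ref{0602a}) would produce $\vec{e}\leq_{\mathcal{F}\cup\mathcal{G}}\vec{b}$ with $\FR_{\mathcal{F}\cup\mathcal{G}}(\vec{e})\subseteq Z_i$ for some $i$, while $\vec{e}(0)$ and $f(\vec{e}(0))$ both lie in $\FR_{\mathcal{F}\cup\mathcal{G}}(\vec{e})$, contradicting $Z_i\cap f[Z_i]=\emptyset$. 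Hence $\FR_{\mathcal{F}\cup\mathcal{G}}(\vec{b})\subseteq X_f$. Iterating this over the operations of $\mathcal{F}$, using transitivity of $\leq_{\mathcal{F}\cup\mathcal{G}}$ and monotonicity of $\FR_{\mathcal{F}\cup\mathcal{G}}$ along it, drives $\FR_{\mathcal{F}\cup\mathcal{G}}$ into $\bigcap_{f\in\mathcal{F}}X_f=S$. Carrying this out uniformly---trivial when $\mathcal{F}$ is finite---is the delicate point I expect to require the most care.
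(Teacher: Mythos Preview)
Your collapsing lemma and the implications $(2)\Rightarrow(3)$ and $(3)\Rightarrow(1)$ are correct and match the paper's reasoning (the paper states the collapse in one line, but what you wrote is precisely what justifies it). Your single-$f$ argument in the direction $(1)\Rightarrow(2)$ is also fine.

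The gap is exactly where you flag it: the iteration over $f\in\mathcal{F}$ does not close when $\mathcal{F}$ is infinite. There is no obvious limit step for a transfinite descent along $\leq_{\mathcal{F}\cup\mathcal{G}}$, and nothing in the hypotheses provides one. The paper sidesteps the iteration entirely by a single application of Lemma~\ref{0925a}. Using the Axiom of Choice, pick for each $a\in A\setminus S$ one $f_a\in\mathcal{F}$ with $f_a(a)\neq a$, and define a single map $T$ on $A\cup\{\alpha\}$ (with $\alpha$ a fresh point) by $T(a)=f_a(a)$ for $a\notin S$, $T(a)=\alpha$ for $a\in S$, and $T(\alpha)=a_0$ for some fixed $a_0\in A$. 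Then $T$ is fixed-point free on all of $A\cup\{\alpha\}$, so Lemma~\ref{0925a} produces a partition $A=A_1\cupdot A_2\cupdot A_3$ with the property that whenever $a\in A_i\setminus S$, \emph{some} $f\in\mathcal{F}$ sends $a$ out of $A_i$. Now one use of Ramseyness (Remark~\ref{0602a}) gives $\vec{b}\leq_{\mathcal{F}\cup\mathcal{G}}\vec{a}$ with $\FR_{\mathcal{F}\cup\mathcal{G}}(\vec{b})\subseteq A_i$ for some $i$; if there were $a\in\FR_{\mathcal{G}}(\vec{b})\setminus S$, then $f_a(a)\in\FR_{\mathcal{F}\cup\mathcal{G}}(\vec{b})\subseteq A_i$ while $f_a(a)\notin A_i$, a contradiction. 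So $\FR_{\mathcal{G}}(\vec{b})\subseteq S$ in one shot, with no dependence on the cardinality of $\mathcal{F}$.

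In short: replace ``homogenize for each $X_f$ in turn'' by ``choose one witness per point and homogenize once for the resulting Katet\v{o}v $3$-coloring.''
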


\begin{proof}

($\textit{2} \Rightarrow \textit{3}$) Straightforward.

($\textit{3} \Rightarrow \textit{1}$) Fix $\vec{a} \in {^\omega}\!A$ and $X \subseteq A$. Choose $\vec{b} \leq_{\mathcal{F} \cup \mathcal{G}} \vec{a}$ according to (\textit{3}). Then we can choose $\vec{c} \leq_{ \mathcal{G}} \vec{b}$  such that $\FR_{ \mathcal{G}}(\vec{c})$ is either contained in or disjoint from $X$.
By transitivity, $\vec{c} \leq_{\mathcal{F} \cup \mathcal{G}} \vec{a}$. 
Since $\FR_{ \mathcal{G}}(\vec{c})\subseteq \FR_{ \mathcal{G}}(\vec{b})\subseteq S$, it follows that $\FR_{\mathcal{F} \cup\mathcal{G}}(\vec{c})=\FR_{\mathcal{G}}(\vec{c})$. Hence, $\vec{c}$ is homogeneous for $X$ with respect to $\mathcal{F} \cup \mathcal{G}$.

($\textit{1} \Rightarrow \textit{2}$) First of all, we define a useful coloring. Let $\alpha$ be a symbol not in $A$. By the axiom of choice, choose an operation $T \colon A \cup \{\alpha\} \rightarrow A \cup \{\alpha\}$ such that 
\begin{itemize}
\item if $a \in S$, then $T(a)= \alpha$; and
\item $T(\alpha)=a_0$, where $a_0$ is some fixed element of $A$;
\item if $a \in A \cap S^c$, then $T(a)= f(a)\neq a$ for some $f \in \mathcal{F}$. 
\end{itemize}
By our choice, $T$ does not have a fixed point.
By Lemma \ref{0925a}, there exists a partition $A \cup \{\alpha\}=B_1 \cupdot B_2 \cupdot B_3$ such that $B_i \cap T[B_i]=\emptyset$ for $i=1,2,3$. This induces a partition $A=A_1 \cupdot A_2 \cupdot A_3$ such that  
whenever $a\in A\cap S^c$, if $a\in A_i$, then $f(a) \notin A_i$ for some $f \in \mathcal{F}$. 

Note that for every $\vec{b} \in {^\omega}\!A$, if $\FR_{ \mathcal{G}}(\vec{b})\subseteq S$, then 
$(A, \mathcal{G})$ is Ramsey below $\vec{b}$ 
if and only if $(A,\mathcal{F}\cup \mathcal{G})$ is Ramsey below $\vec{b}$. Hence, by Remark~\ref{0102a}, it suffices to show that for every $\vec{a} \in {^\omega}\!A$, there exists $\vec{b}  \leq_{\mathcal{F} \cup \mathcal{G}} \vec{a}$ such that $\FR_{ \mathcal{G}}(\vec{b})\subseteq S$. 
We argue by contradiction. Assume $\vec{a}\in {^\omega}\!A$ is a sequence such that $\FR_{ \mathcal{G}}(\vec{b})\nsubseteq S$ for every  $\vec{b} \leq_{\mathcal{F} \cup \mathcal{G}} \vec{a}$. 
Since  $(A, \mathcal{F} \cup \mathcal{G})$ is a Ramsey algebra, by Remark~\ref{0602a}, choose  $\vec{b} \leq_{\mathcal{F} \cup \mathcal{G}} \vec{a}$ such that $\FR_{\mathcal{F}\cup \mathcal{G}}(\vec{b})\subseteq A_i$  for some $1 \leq i \leq 3$.  By the assumption on $\vec{a}$, choose $a$ such that $a\in \FR_{\mathcal{G}}(\vec{b})\cap S^c$. Since $ \FR_{\mathcal{G}}(\vec{b})\subseteq \FR_{\mathcal{F}\cup \mathcal{G}}(\vec{b})\subseteq A_i$, thus $a \in A_i \cap S^c$. This implies that $f(a) \notin A_i$
for some $f \in \mathcal{F}$, by our choice of partition.
However, by the definition of finite reduction, since $a\in \FR_{\mathcal{G}}(\vec{b})$, it follows that $f(a)$ is in $\FR_{\mathcal{F}\cup \mathcal{G}}(\vec{b})$ and hence is in $A_i$, a contradiction.
\end{proof}


\begin{corollary}\label{1025f}
Suppose $\mathcal{F}$ is a collection of unary operations on a set $A$ and $\mathcal{G}$ is a collection of non-unary operations on $A$.
Let $S$ denote $\{\,a \in A\mid f(a)=a \text{ \textnormal{for all} } f \in \mathcal{F}\,\}$.
Assume $(A,  \mathcal{G})$ is a Ramsey algebra.   Then $(A, \mathcal{F} \cup \mathcal{G})$ is a Ramsey algebra if and only if for every $\vec{a} \in {^\omega}\!A$, there exists $\vec{b} \leq_{\mathcal{F} \cup \mathcal{G}} \vec{a}$ such that
 $\FR_{\mathcal{G}}(\vec{b})\subseteq S$. 
\end{corollary}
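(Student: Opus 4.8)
The plan is to read this off directly from Theorem~\ref{0928b}, using the hypothesis that $(A,\mathcal{G})$ is a Ramsey algebra only to simplify condition~(2) of that theorem. Recall that Theorem~\ref{0928b} asserts the equivalence of: (1) $(A,\mathcal{F}\cup\mathcal{G})$ is Ramsey; (2) for every $\vec{a}\in{^\omega}\!A$ there is $\vec{b}\leq_{\mathcal{F}\cup\mathcal{G}}\vec{a}$ with $\FR_{\mathcal{G}}(\vec{b})\subseteq S$ \emph{and} $(A,\mathcal{G})$ Ramsey below $\vec{b}$; and (3) a similar statement. So the only thing to do is to check that, under the standing assumption, the clause ``$(A,\mathcal{G})$ is Ramsey below $\vec{b}$'' in~(2) is automatic, and then quote the theorem.

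For the forward implication I would argue as follows. Suppose $(A,\mathcal{F}\cup\mathcal{G})$ is a Ramsey algebra, i.e.\ condition~(1) of Theorem~\ref{0928b} holds. Then condition~(2) holds as well, so in particular, for every $\vec{a}\in{^\omega}\!A$ there exists $\vec{b}\leq_{\mathcal{F}\cup\mathcal{G}}\vec{a}$ with $\FR_{\mathcal{G}}(\vec{b})\subseteq S$; we simply discard the extra assertion that $(A,\mathcal{G})$ is Ramsey below $\vec{b}$. (Note this direction does not even use the hypothesis that $(A,\mathcal{G})$ is Ramsey.)

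For the reverse implication, suppose that for every $\vec{a}\in{^\omega}\!A$ there exists $\vec{b}\leq_{\mathcal{F}\cup\mathcal{G}}\vec{a}$ with $\FR_{\mathcal{G}}(\vec{b})\subseteq S$. By hypothesis $(A,\mathcal{G})$ is a Ramsey algebra, so by Remark~\ref{0102a} it is Ramsey below every infinite sequence in $A$, and in particular below each such $\vec{b}$. Hence condition~(2) of Theorem~\ref{0928b} is satisfied, and that theorem gives that $(A,\mathcal{F}\cup\mathcal{G})$ is a Ramsey algebra. The two implications together yield the stated equivalence. There is no genuine obstacle here: the substance is entirely in Theorem~\ref{0928b}, and the lone observation needed is that the global Ramsey hypothesis on $(A,\mathcal{G})$, via Remark~\ref{0102a}, is precisely what makes the ``Ramsey below $\vec{b}$'' requirement in condition~(2) vacuous.
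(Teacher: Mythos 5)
Your proposal is correct and is exactly the ``straightforward'' deduction from Theorem~\ref{0928b} that the paper intends: the forward direction simply drops the extra clause in condition~(2), and the reverse direction uses Remark~\ref{0102a} to see that the global Ramsey hypothesis on $(A,\mathcal{G})$ makes the ``Ramsey below $\vec{b}$'' clause automatic. Nothing is missing.
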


\begin{proof}
Straightforward by Theorem \ref{0928b}.
\end{proof}

\begin{theorem}\label{1001b}
Suppose $\mathcal{F}$ is a collection of unary operations on a set $A$. Let $S$ denote $\{\,a \in A\mid f(a)=a \text{ \textnormal{for all} } f \in \mathcal{F}\,\}$. Then the following are equivalent.
\begin{enumerate}
\item $(A, \mathcal{F})$ is a Ramsey algebra.
\item  Every element in $A$ can be sent to an element in $S$ by finitely many  applications of the unary operations in $\mathcal{F}$.
\end{enumerate}
\end{theorem}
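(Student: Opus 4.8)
The plan is to obtain this as a corollary of Theorem~\ref{0928b} by taking $\mathcal{G} = \emptyset$. Note that $(A, \emptyset)$ is a Ramsey algebra by Example~\ref{2601a}, and it is Ramsey below $\vec{b}$ for every $\vec{b}$ (this is just the statement that subsequences of subsequences can be chosen homogeneous, as in Example~\ref{2601a}). So conditions (\textit{2}) and (\textit{3}) of Theorem~\ref{0928b} with $\mathcal{G} = \emptyset$ both simplify: since the only orderly term over $\emptyset$ is the identity, $\FR_{\emptyset}(\vec{b}) = \{\,\vec{b}(i) \mid i \in \omega\,\}$, and ``$(A,\emptyset)$ is Ramsey below $\vec{b}$'' holds automatically. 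Hence Theorem~\ref{0928b} gives: $(A, \mathcal{F})$ is a Ramsey algebra if and only if for every $\vec{a} \in {}^{\omega}\!A$ there exists $\vec{b} \leq_{\mathcal{F}} \vec{a}$ with every term $\vec{b}(i)$ lying in $S$.

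It then remains to show that this reduction-theoretic condition is equivalent to the pointwise condition (\textit{2}) in the present theorem. First I would unwind the definition of $\vec{b} \leq_{\mathcal{F}} \vec{a}$: since all operations in $\mathcal{F}$ (hence all orderly terms over $\mathcal{F}$, other than the identity) are unary, a reduction $\vec{b}$ of $\vec{a}$ has the form $\vec{b}(k) = f_k(\vec{a}(i_k))$ where $f_k$ is an orderly term over $\mathcal{F}$ (a finite composition of members of $\mathcal{F}$, or the identity) and $i_0 < i_1 < i_2 < \cdots$. So a value $\vec{b}(k)$ is obtainable from a given term of $\vec{a}$ precisely by applying finitely many operations from $\mathcal{F}$ to it (the identity counting as zero applications).

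For the direction (\textit{2}) $\Rightarrow$ (\textit{1}): given $\vec{a}$, for each $i$ pick an orderly term $f_i$ over $\mathcal{F}$ (a composite of finitely many operations in $\mathcal{F}$) sending $\vec{a}(i)$ into $S$, which exists by (\textit{2}); set $\vec{b}(i) = f_i(\vec{a}(i))$. Then $\vec{b} \leq_{\mathcal{F}} \vec{a}$ (using the single-element finite subsequences $\vec{a}_i = \langle \vec{a}(i) \rangle$) and every term of $\vec{b}$ lies in $S$, so the criterion above is met and $(A,\mathcal{F})$ is Ramsey. For (\textit{1}) $\Rightarrow$ (\textit{2}): fix $a \in A$ and apply the criterion to the constant sequence $\vec{a} = \langle a, a, a, \dotsc \rangle$; we get $\vec{b} \leq_{\mathcal{F}} \langle a, a, \dotsc\rangle$ with all terms in $S$. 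In particular $\vec{b}(0) \in S$, and $\vec{b}(0) = f(\vec{a}(i_0)) = f(a)$ for some orderly term $f$ over $\mathcal{F}$, i.e.\ $f$ is a finite composition of operations in $\mathcal{F}$ applied to $a$ (if $f$ is the identity then $a \in S$ already). Hence $a$ can be sent into $S$ by finitely many applications of operations in $\mathcal{F}$.

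I do not anticipate a serious obstacle here; the only point requiring a little care is the bookkeeping identifying ``orderly term over a collection of unary operations'' with ``finite composition of those operations, including the empty composition (identity)'', and making sure the degenerate case $a \in S$ (zero applications) is covered in both directions. Everything else is a direct translation through Theorem~\ref{0928b}.
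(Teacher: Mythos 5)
Your proposal is correct and follows essentially the same route as the paper: the paper also reduces the theorem to the case $\mathcal{G}=\emptyset$ of Theorem~\ref{0928b} (via Corollary~\ref{1025f}, which is itself just that theorem specialized to a Ramsey $(A,\mathcal{G})$), using Example~\ref{2601a} for the empty algebra, and then performs the same translation between reductions with all terms in $S$ and finite compositions of unary operations landing in $S$. Your explicit bookkeeping about orderly terms over unary operations being exactly finite compositions (including the identity) is sound and matches what the paper leaves implicit.
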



\begin{proof}
In Example~\ref{2601a}, we have seen that the empty algebra $(A,\emptyset)$ is Ramsey; thus the conclusion of Corollary \ref{1025f} holds with $\mathcal{G}=\emptyset$. 

(\textit{1} $\Rightarrow$ \textit{2}) Suppose $a \in A$. By Corollary~\ref{1025f}, 
choose $\vec{b} \leq_{\mathcal{F}} \langle a, a,a, \dotsc \rangle$   such that $\FR_{\emptyset}(\vec{b})\subseteq S$. 
From the definition of reduction, $\vec{b}(0)$ is obtained from $a$ by finitely many  applications of the unary operations in $\mathcal{F}$.
Additionally, $\vec{b}(0)\in \FR_{\emptyset}(\vec{b})\subseteq S$. 
 
(\textit{2} $\Rightarrow$ \textit{1}) The assumption implies that for every $\vec{a} \in {^\omega}\!A$, there exists $\vec{b} \in {^\omega}\!S$ such that $\vec{b} \leq_{\mathcal{F}} \vec{a}$. However, $\vec{b} \in {^\omega}\!S$ implies that $\FR_{\emptyset}(\vec{b})\subseteq S$. Therefore, by Corollary~\ref{1025f},
$(A, \mathcal{F})$ is a Ramsey algebra.
\end{proof}

\begin{example}\label{0809e}
Let $f$ and $g$ be the following unary operations on $\omega$.
$$f(x)= \begin{cases}
x,  &\text{if } x\in 3\omega\\
x-1,  &\text{if } x\in 3\omega+1\\
x+3, &\text{otherwise }
\end{cases} \qquad 
g(x)= \begin{cases}
x,  &\text{if } x\in 3\omega\\
x+3,  &\text{if } x\in 3\omega+1\\
x-2, &\text{otherwise }
\end{cases}$$
Using Theorem~\ref{1001b}, it is easy to verify  that neither $(\omega,f)$ nor $(\omega,g)$ is a Ramsey algebra but $(\omega,f,g)$ is.
\end{example}

\section{Rings}\label{190813}

For us a ring is not assumed to have a multiplicative identity. By Theorem~\ref{0808}, every finite ring is a Ramsey algebra because the zero element is contained in any subalgebra. On the contrary, we will show that no infinite ring without  zero divisors is a Ramsey algebra. 
First of all, monotone polynomials will be defined. For any ring, the orderly terms over the ring operations will be shown to be 
defined by these monotone polynomials.

Suppose $x_1, x_2, x_3, \dotsc$ is a fixed list of variables.
For every nonempty finite strictly increasing sequence $\alpha$ of positive integers, let $x^\alpha$ denote the expression $x_{\alpha(0)}x_{\alpha(1)}\dotsm \,x_{\alpha(\vert \alpha\vert -1)}$. (For notational brevity, we suppress the arrow above $\alpha$.) We say that $x^{\alpha}$ is a \emph{monotone monomial}.
A \emph{monotone  polynomial}  is a formal sum of the form $\sum_{\alpha \in M} x^\alpha$, where $M$ is a finite set of nonempty finite strictly increasing sequences of positive integers. In other words, a monotone polynomial is a  finite nonrepetitive sum of monotone monomials. If $M$ is empty, we say that $\sum_{\alpha \in M} x^\alpha$ is the \emph{zero polynomial}. Notice that  $\sum_{\alpha \in M} x^\alpha$ can be identified with $M$.
A variable $x_i$ is said to appear in $\sum_{\alpha \in M} x^\alpha$  if and only if $i$ appears in $\alpha$ for some $\alpha \in M$.
Let $\MP_n$ denote the set of monotone  polynomials $\sum_{\alpha \in M} x^\alpha$ such that the variables appearing in $\sum_{\alpha \in M} x^\alpha$ are among  $x_1, \dotsc, x_n$. Obviously,  $\MP_1 \subseteq \MP_2 \subseteq \MP_3 \subseteq \dotsb$. 


Suppose $\mathfrak{A}= (A,+,\cdot) $ is a ring and suppose $\sum_{\alpha \in M} x^\alpha\in \MP_n$. Then $\sum_{\alpha \in M} x^\alpha$ defines naturally an $n$-ary operation 
$\sum_{\alpha \in M} x^\alpha \colon A^n \rightarrow A$ given by
$$(\sum\nolimits_{\alpha \in M} x^\alpha  )(a_1, \dotsc, a_n)= \sum_{\alpha \in M }a_{\alpha(0)}a_{\alpha(1)}\dotsm \,a_{\alpha(\vert \alpha\vert -1)}.$$
Since $\mathfrak{A}$ is a ring, the product $a_{\alpha(0)}a_{\alpha(1)}\dotsm \,a_{\alpha(\vert \alpha\vert -1)}$ is unambiguous and the order of the summation is irrelevant. By convention, the zero polynomial evaluates everywhere to the zero element $0_{\mathfrak{A}}$ of the ring. As a matter of taste,
let $\sum_{\alpha \in M } (a_1, \dotsc, a_n)^\alpha$ denote $(\sum\nolimits_{\alpha \in M} x^\alpha  )(a_1, \dotsc, a_n)$.

\begin{lemma}\label{1025a}
Suppose $(A,+,\cdot)$ is a ring. For every orderly term $f$ over $\{+,\cdot\}$, if $f$ is $n$-ary, then there exists a monotone polynomial $\sum_{\alpha \in M} x^\alpha$ such that
\begin{enumerate}
\item the variables appearing in $\sum_{\alpha \in M} x^\alpha$ are exactly $x_1, x_2, \dotsc, x_n$;
\item $f(a_1, a_2,\dotsc,a_n)   =  \sum_{\alpha \in M} (a_1, a_2, \dotsc, a_n)^\alpha$ for all  $a_1, a_2, \dotsc,a_n \in A$.
\end{enumerate}
\end{lemma}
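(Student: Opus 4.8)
The plan is to prove Lemma~\ref{1025a} by induction on the generation of orderly terms over $\{+,\cdot\}$, following exactly the induction scheme spelled out after Definition~\ref{0601a}: handle the base cases (the identity operation and the two operations $+$ and $\cdot$), then carry out the inductive step for an orderly composition $f(\vec{x}) = g(h_1(\vec{x}_1),\dotsc,h_n(\vec{x}_n))$ with $g \in \{+,\cdot\}$ and each $h_i$ an orderly term satisfying the conclusion. The base cases are immediate: the identity on $A$ is represented by the monotone monomial $x_1$ (so $M = \{\langle 1\rangle\}$); the operation $+$ is represented by $x_1 + x_2$, i.e.\ $M = \{\langle 1\rangle, \langle 2\rangle\}$; and $\cdot$ is represented by the monotone monomial $x_1 x_2$, i.e.\ $M = \{\langle 1, 2\rangle\}$. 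In each case condition~(1), that the variables appearing are \emph{exactly} $x_1,\dotsc,x_n$, is evidently satisfied.

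For the inductive step I would first set up notation for re-indexing. Suppose $h_i$ is $k_i$-ary and is represented by a monotone polynomial $P_i = \sum_{\alpha \in M_i} x^\alpha$ in the variables $x_1,\dotsc,x_{k_i}$ (by the inductive hypothesis, all of these variables actually appear). Let $s_0 = 0$ and $s_i = k_1 + \dots + k_i$, so that in the concatenation $\vec{x} = \vec{x}_1 \ast \dotsb \ast \vec{x}_n$ the block $\vec{x}_i$ occupies the coordinates $s_{i-1}+1,\dotsc,s_i$; here $m = s_n = \sum k_i$. Define a shift map $\sigma_i$ sending a strictly increasing sequence $\alpha$ of positive integers in $\{1,\dotsc,k_i\}$ to the strictly increasing sequence obtained by adding $s_{i-1}$ to every entry; write $M_i' = \{\sigma_i(\alpha) \mid \alpha \in M_i\}$, which is a set of strictly increasing sequences of integers drawn from $\{s_{i-1}+1,\dotsc,s_i\}$. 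Then for any $a_1,\dotsc,a_m \in A$ one has $h_i(\vec{x}_i \text{ evaluated}) = h_i(a_{s_{i-1}+1},\dotsc,a_{s_i}) = \sum_{\beta \in M_i'} (a_1,\dotsc,a_m)^\beta$, and because each $h_i$ uses exactly its own $k_i$ variables, the variables appearing in $\sum_{\beta \in M_i'} x^\beta$ are exactly $x_{s_{i-1}+1},\dotsc,x_{s_i}$, and the blocks $M_1',\dotsc,M_n'$ involve pairwise disjoint sets of variables.

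Now I would split into the two cases for $g$. If $g = +$ (so $n = 2$), set $M = M_1' \cup M_2'$; since the two index sets are disjoint this is a nonrepetitive sum of monotone monomials, hence a genuine monotone polynomial, and it represents $f = h_1(\vec{x}_1) + h_2(\vec{x}_2)$; moreover the appearing variables are exactly $x_1,\dotsc,x_{s_2} = x_1,\dotsc,x_m$ since blocks $1$ and $2$ together cover all coordinates. If $g = \cdot$ (so $n = 2$ again), I form the product: $f(\vec{x}) = \bigl(\sum_{\beta \in M_1'} x^\beta\bigr)\bigl(\sum_{\gamma \in M_2'} x^\gamma\bigr)$; distributing, this equals $\sum_{\beta \in M_1'}\sum_{\gamma \in M_2'} x^\beta x^\gamma$, and since every index in $\beta$ is $\le s_1 < s_1 + 1 \le$ every index in $\gamma$, the concatenation $\beta \ast \gamma$ is again a strictly increasing sequence of positive integers, so $x^\beta x^\gamma = x^{\beta \ast \gamma}$ is a monotone monomial. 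Taking $M = \{\beta \ast \gamma \mid \beta \in M_1', \gamma \in M_2'\}$ gives a monotone polynomial representing $f$, with appearing variables exactly $x_1,\dotsc,x_m$. In both cases I should also remark that associativity and commutativity of $+$ and the unambiguity of iterated products in a ring (already noted in the text) are what make the evaluation identities in conditions (2) hold coordinate-wise.

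The only subtlety — and the step I would flag as needing a little care rather than being truly hard — is condition~(1), the requirement that the representing polynomial use \emph{exactly} the variables $x_1,\dotsc,x_n$ and not merely a subset of them. This could fail if some $h_i$ were, say, a constant-like term not mentioning one of its nominal inputs, or if a monotone monomial could somehow vanish; but monotone polynomials have no cancellation built in (the sum is formal and nonrepetitive, and there are no minus signs or coefficients), so a nonzero $M_i$ stays nonzero, and the inductive hypothesis guarantees each $h_i$ genuinely mentions all $k_i$ of its variables — hence the disjoint union (or product) of the shifted blocks mentions all $m$ coordinates. I would also make sure to note that $f$ being an orderly term forces $n \ge 1$ and each $k_i \ge 1$ (operations are never nullary), so no block is empty and the bookkeeping with the $s_i$ is clean. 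One minor point worth a sentence: the lemma as stated allows $f$ to be the identity (the degenerate orderly composition), which is why the base case includes it; everything else is genuinely routine re-indexing.
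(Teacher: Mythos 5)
Your proposal is correct and follows essentially the same route as the paper's proof: induction on the generation of orderly terms with the same base cases ($x_1$, $x_1+x_2$, $x_1x_2$), the same shift/reindexing of the second block's variables, and the same two cases (disjoint union of index sets for $+$, concatenation $M_1 \ast M_2$ for $\cdot$). Your extra care about condition~(1) --- that the inductive hypothesis forces every block to mention all of its variables, so the union or product mentions exactly $x_1,\dotsc,x_{m+n}$ --- is left implicit in the paper but is the same observation.
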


\begin{proof}
The proof is by induction on the generation of the orderly terms over $\{+,\cdot\}$.
Clearly, the identity function on $A$ corresponds to the monotone monomial $x_1$.
Similarly, the ring addition corresponds  to the monotone polynomial $x_1+x_2$
and the ring multiplication corresponds to the monotone monomial $x_1x_2$.

Suppose $f$ is given by $f(\vec{x}_1\ast\vec{x}_2)=g(h_1(\vec{x}_1), h_2(\vec{x}_2))$, where $g \in \{+,\cdot\}$ while $h_1$ and $h_2$ are $m$-ary and $n$-ary orderly terms over $\{+,\cdot\}$ respectively.
By the induction hypothesis, for some monotone polynomial $\sum_{\alpha \in M_1} x^\alpha$ such that the variables appearing in it are exactly $x_1, x_2, \dotsc, x_m$,
$$h_1(a_1, a_2,\dotsc,a_m)   =  \sum_{\alpha \in M_1} (a_1, a_2, \dotsc, a_m)^\alpha$$
for all $a_1, a_2, \dotsc,a_m \in A$.
Similarly, after reindexing the variables,  for some monotone polynomial $\sum_{\alpha \in M_2} x^\alpha$ such that the variables appearing in it are exactly $x_{m+1}, x_{m+2}, \dotsc, x_{m+n}$, 
$$h_2(a_{m+1}, a_{m+2}, \dotsc, a_{m+n})   =  \sum_{\alpha \in M_2} (a_1, a_2, \dotsc, a_{m+n})^\alpha$$
for all $a_1, a_2,  \dotsc, a_{m+n} \in A$.

\emph{Case} 1. If $g$ is the ring addition, then 
\begin{align*}
f(a_1, a_2,\dotsc, a_{m+n})=& \sum_{\alpha \in M_1} (a_1, a_2, \dotsc, a_m)^\alpha+ \sum_{\alpha \in M_2} (a_1, a_2, \dotsc, a_{m+n})^\alpha\\
=& \sum_{\alpha \in M_1\cup M_2} (a_1, a_2,\dotsc, a_{m+n})^\alpha \qquad\; \text{($\because$  $M_1$ and $M_2$ are disjoint)}
\end{align*}
for all $a_1, a_2,\dotsc, a_{m+n} \in A$. 

\emph{Case} 2. If $g$ is the ring multiplication, then 
\begin{align*}
f(a_1, a_2,\dotsc, a_{m+n})  =& \sum_{\alpha \in M_1} (a_1, a_2, \dotsc, a_m)^\alpha\cdot \sum_{\alpha \in M_2} (a_1, a_2, \dotsc, a_{m+n})^\alpha\\
=&\sum_{\alpha \in M_1\ast M_2} (a_1, a_1,\dotsc, a_{m+n})^\alpha
\end{align*}
for all $a_1, a_1,\dotsc, a_{m+n} \in A$, where
$M_1 \ast M_2=\{\alpha_1\ast \alpha_2 \mid \alpha_1 \in M_1 \text{ and } \alpha_2 \in M_2  \}$.

In either case, the variables appearing in the corresponding monotone polynomial are exactly $x_1, x_2,\dotsc, x_{m+n} $.
\end{proof}

\begin{remark}
In fact, the monotone polynomial in Lemma~\ref{1025a} can be chosen uniformly. It depends on the path of generation of a given orderly term and not on the specific ring operations.
In other words, it depends on the corresponding formal orderly term (see \cite{wcT13a}) in the language of rings. The proof then goes by induction on the complexity of formal orderly terms.
\end{remark}

\begin{lemma}\label{1025b}
Suppose  $\mathfrak{A}=(A,+,\cdot)$  is an infinite ring without zero divisors. There exists a sequence $\langle a_i \rangle_{i \geq 1} \in {^\omega}\!A$ such that for every
$n \geq 1$, whenever $\sum_{\alpha \in M} x^\alpha$, $\sum_{\alpha \in N} x^\alpha \in \MP_n$ are distinct, $\sum_{\alpha \in M} (a_1,a_2, \dotsc, a_n)^\alpha \neq \sum_{\alpha \in N} (a_1,a_2, \dotsc, a_n)^\alpha$.
\end{lemma}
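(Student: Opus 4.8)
**The plan is to construct the sequence $\langle a_i \rangle_{i \geq 1}$ recursively, choosing each $a_n$ so that it avoids all "bad" coincidences involving only the first $n$ variables.** The point is that for fixed $n$, the set $\MP_n$ is finite, so there are only finitely many pairs of distinct monotone polynomials in $\MP_n$; I want to ensure that, for each such pair, the two evaluations at $(a_1,\dots,a_n)$ disagree. This will be arranged one variable at a time.

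First I would set up the recursion. Suppose $a_1,\dots,a_{n-1}$ have already been chosen so that all distinct pairs in $\MP_{n-1}$ are separated at $(a_1,\dots,a_{n-1})$. Now I must choose $a_n$. Take any two distinct $\sum_{\alpha\in M}x^\alpha,\ \sum_{\alpha\in N}x^\alpha\in\MP_n$. I want to show that for all but finitely many choices of $a_n\in A$ (with $a_1,\dots,a_{n-1}$ fixed), the evaluations differ. The key observation is that the difference
$$\Bigl(\sum_{\alpha\in M}(a_1,\dots,a_{n-1},x)^\alpha\Bigr)-\Bigl(\sum_{\alpha\in N}(a_1,\dots,a_{n-1},x)^\alpha\Bigr)$$
is, when expanded out in the single remaining indeterminate $x=a_n$, a polynomial expression over the ring whose nonzero terms (after collecting) cannot all cancel. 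More precisely, group each monotone monomial $x^\alpha$ according to whether $n$ appears in $\alpha$ or not; a monomial not containing $x_n$ contributes a constant (an element of $A$ built from $a_1,\dots,a_{n-1}$), and a monomial containing $x_n$ contributes an element of the form $u\,a_n\,v$ or $u\,a_n$ or $a_n\,v$ with $u,v$ built from earlier $a_i$'s. Because $\MP_{n-1}$ is already separated, if $M$ and $N$ have the same "$x_n$-free part" then their "$x_n$-containing parts" are distinct and nonzero; conversely if the $x_n$-free parts differ, the difference is a fixed nonzero element of $A$ plus a term that vanishes or is nonzero depending on $a_n$. The upshot is that the difference, as a function of $a_n$, is given by a nonzero polynomial-like expression over the ring $\mathfrak A$, and \emph{because $\mathfrak A$ has no zero divisors} such an expression has only finitely many roots. (Here one uses that a ring without zero divisors embeds in no finite obstruction to this — the standard argument that a degree-$d$ polynomial over an integral-domain-like ring has at most $d$ roots, applied with care since $\mathfrak A$ need not be commutative, but the leading-coefficient cancellation argument still forbids infinitely many roots when the top-degree coefficient is a nonzero element and there are no zero divisors.) Since $A$ is infinite and only finitely many pairs $(M,N)$ from the finite set $\MP_n$ are in play, I can pick $a_n$ avoiding all these finitely many forbidden values, maintaining the invariant.

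To finish, I would note that Lemma~\ref{1025a} is not strictly needed for the statement of Lemma~\ref{1025b} itself but sets the context: the sequence so constructed "separates" all monotone polynomials, hence (via Lemma~\ref{1025a}) separates the values of all orderly terms over $\{+,\cdot\}$. The construction gives a single sequence $\langle a_i\rangle_{i\geq1}$ that works simultaneously for every $n$, because the invariant is nested: separation for $\MP_n$ at $(a_1,\dots,a_n)$ implies separation for $\MP_m$ at $(a_1,\dots,a_m)$ for all $m\le n$ (as $\MP_m\subseteq\MP_n$ and restricting to fewer variables does not merge distinct polynomials once they are separated), so passing to the limit sequence is automatic.

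**The main obstacle** I anticipate is the noncommutativity of $\mathfrak A$: "polynomial in one variable has finitely many roots" is a commutative-ring fact, and in a noncommutative ring without zero divisors the naive statement can fail for genuine polynomial identities, but here the relevant expressions are built by substituting a single ring element $a_n$ into monotone monomials with coefficients sandwiched on both sides, and the degree-in-$a_n$ grading together with the no-zero-divisors hypothesis must be used delicately — specifically, one isolates the monomials of maximal $x_n$-degree, factors out, and argues that a nonzero element times $a_n^k$ times a nonzero element cannot be absorbed by strictly lower-degree terms for infinitely many $a_n$. Getting this absorption/leading-term argument right, rather than invoking a black-box "polynomials have finitely many roots," is the delicate part; everything else is bookkeeping over the finite set $\MP_n$.
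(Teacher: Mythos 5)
Your overall strategy---inductive construction of $\langle a_i\rangle$, avoiding finitely many bad values of $a_n$ at each stage using the absence of zero divisors and the finiteness of $\MP_n$---is the same as the paper's. But the step you yourself flag as ``the delicate part'' is exactly where the proposal has a genuine gap, and the difficulty is not where you locate it. Because every $\alpha$ indexing a monotone monomial is \emph{strictly increasing}, the new variable $x_{n+1}$ occurs at most once in each monomial and only as the \emph{final} factor; so $\sum_{\alpha\in M}(a_1,\dotsc,a_n,b)^\alpha$ is affine in $b$, of the form $\sigma_{M'}+\sigma_{M''}b$ (with an extra $+b$ when the bare monomial $\langle n+1\rangle$ lies in $M$). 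There are no sandwiched terms $u\,a_n\,v$ and no powers $a_n^k$ with $k\ge 2$, so the noncommutative root-counting problem you worry about never arises. The real obstruction is elsewhere: when $\langle n+1\rangle$ belongs to exactly one of $M,N$, two distinct solutions $b_1,b_2$ yield $(\sigma_{M''}-\sigma_{N''})(b_1-b_2)=b_1-b_2$, which (no zero divisors) forces $\sigma_{M''}-\sigma_{N''}$ to be a multiplicative identity---and then \emph{every} $b$ can be a solution, so no choice of $a_{n+1}$ works. Concretely, over $\mathbb{Z}$ with $a_1=1$, the distinct polynomials $x_1x_2$ and $x_2$ agree at $(1,b)$ for all $b$, so an invariant that only records ``distinct polynomials are separated'' is not strong enough to push the induction through.

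The paper's fix, which your proposal is missing, is to strengthen the induction hypothesis: one maintains not only that distinct $\sum_{\alpha\in M}x^\alpha,\sum_{\alpha\in N}x^\alpha\in\MP_n$ have different values at $(a_1,\dotsc,a_n)$, but also that their difference is never a multiplicative identity (seeded by choosing $a_1\notin\{0_{\mathfrak A},1_{\mathfrak A},-1_{\mathfrak A}\}$). The claim then becomes: for each distinct pair and each $\epsilon\in\{0_{\mathfrak A},1_{\mathfrak A}\}$, at most one $b$ satisfies $\sum_M(\dotsc,b)=\sum_N(\dotsc,b)+\epsilon$, which is what lets you avoid finitely many $b$'s while preserving the strengthened invariant at stage $n+1$. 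Without adding this (or an equivalent device) to your invariant, the recursion can stall, so the proposal as written does not close.
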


\begin{proof}
In fact, we will construct a sequence $\langle a_i \rangle_{i \geq 1} \in {^\omega}\!A$ inductively such that for every $n \geq 1$, whenever $\sum_{\alpha \in M} x^\alpha, \sum_{\alpha \in N} x^\alpha \in \MP_n$ are distinct, $\sum_{\alpha \in M} (a_1, \dotsc, a_n)^\alpha \neq \sum_{\alpha \in N} (a_1, \dotsc, a_n)^\alpha$ and $\sum_{\alpha \in M} (a_1, \dotsc, a_n)^\alpha - \sum_{\alpha \in N} (a_1, \dotsc, a_n)^\alpha$ is not a  multiplicative identity. 

Clearly, if $\sum_{\alpha \in M} x^\alpha \in \MP_1$ then $\sum_{\alpha \in M} x^\alpha$ is either the zero polynomial or $x_1$. Simply choose $a_1$ to be any nonzero element of the ring that is neither  $1_{\mathfrak{A}}$ nor  $-1_{\mathfrak{A}}$   (if the ring has a multiplicative identity). 
Assume $a_1,a_2, \dotsc, a_n$ have been constructed satisfying the induction hypothesis.

\begin{claim} Suppose $\sum_{\alpha \in M} x^\alpha, \sum_{\alpha \in N} x^\alpha \in \MP_{n+1}$ are distinct.
For $\epsilon$ equals to either $0_{\mathfrak{A}}$ or $1_{\mathfrak{A}}$ (if the ring has a multiplicative identity),
there is at most  one $b \in A$ such that 
$\sum_{\alpha \in M} (a_1, \dotsc, a_n,b)^\alpha = \sum_{\alpha \in N} (a_1, \dotsc, a_n,b)^\alpha +\epsilon$. 
\end{claim}

Assuming the claim, since the set $\MP_{n+1}$ is finite and the ring is infinite, we can choose $a_{n+1}$ so that it avoids the finitely many $b$ that would violates the requirement on $a_{n+1}$. This completes the induction step.

Now, we prove the claim, using only elementary algebraic arguments. 
Given distinct $\sum_{\alpha \in M} x^\alpha, \sum_{\alpha \in N} x^\alpha \in \MP_{n+1}$ and $\epsilon$, which is equal to either $0_{\mathfrak{A}}$ or $1_{\mathfrak{A}}$.
Let $M'$ denote $\{\, \alpha \in M \mid   \alpha(\vert \alpha\vert-1)<n+1\,\}$ and $M''$ denote $\{\, \alpha \in {^{<\omega}}\mathbb{N} \mid   \alpha \ast \langle n+1\rangle \in M \text{ and } \alpha \neq \langle \phantom{1}\rangle\,\}$. 
Similarly, let $N'$ denote $\{\, \alpha \in N \mid   \alpha(\vert \alpha\vert-1)<n+1\,\}$ and $N''$ denote $\{\, \alpha \in {^{<\omega}}\mathbb{N} \mid   \alpha \ast \langle n+1\rangle \in N \text{ and } \alpha \neq \langle \phantom{1}\rangle\,\}$.
Let $\sigma_{M'}, \sigma_{M''},\sigma_{N'}, \sigma_{N''}$ stand for the ring elements $\sum_{\alpha \in M'}  (a_1, \dotsc, a_n)^\alpha$, $\sum_{\alpha \in M''}  (a_1, \dotsc, a_n)^\alpha$, $\sum_{\alpha \in N'}  (a_1, \dotsc, a_n)^\alpha$, $\sum_{\alpha \in N''}  (a_1, \dotsc, a_n)^\alpha $ respectively.
Using the ring properties, for every $b \in A$,
\begin{align*}
\sum_{\alpha \in M} (a_1, \dotsc, a_n,b)^\alpha&=
\begin{cases}
\displaystyle \sigma_{M'} +\sigma_{M''}b &\text{ if } \langle n+1\rangle \notin M\\
\displaystyle \sigma_{M'} +\sigma_{M''}b +b &\text{ if } \langle n+1\rangle \in M
\end{cases},\; \text{and}\\
\sum_{\alpha \in N} (a_1, \dotsc, a_n,b)^\alpha&=
\begin{cases}
\displaystyle \sigma_{N'} +\sigma_{N''}b &\text{ if } \langle n+1\rangle \notin N \\
\displaystyle \sigma_{N'} +\sigma_{N''}b +b &\text{ if } \langle n+1\rangle \in N
\end{cases}.
\end{align*}

We argue by contradiction. Assume for some distinct $b_1, b_2 \in A$,
\begin{align*}
\sum_{\alpha \in M} (a_1, \dotsc, a_n,b_1)^\alpha=&\sum_{\alpha \in N} (a_1, \dotsc, a_n,b_1)^\alpha+\epsilon, \; \text{and}    \\
\sum_{\alpha \in M} (a_1, \dotsc, a_n,b_2)^\alpha=&\sum_{\alpha \in N} (a_1, \dotsc, a_n,b_2)^\alpha+\epsilon.
\end{align*}

\emph{Case} 1. $\langle n+1 \rangle \in M \leftrightarrow \langle n+1\rangle \in N$.\\
From the two equations above, it follows that $(\sigma_{M''}-\sigma_{N''})(b_1-b_2)=0_{\mathfrak{A}}$. Since the ring has no zero divisors and $b_1-b_2\neq 0_{\mathfrak{A}}$,
this implies $\sigma_{M''}=\sigma_{N''}$. This in turn implies that
$\sigma_{M'}=\sigma_{N'}+\epsilon$ from either of the above equations. However, since  $\sum_{\alpha \in M'} x^\alpha$, $\sum_{\alpha \in N'} x^\alpha  \in \MP_{n}$, by the induction hypothesis, they cannot be distinct and thus $M'=N'$. Similarly, $M''=N''$ and thus $M=N$. Therefore, $\sum_{\alpha \in M} x^\alpha$ and $\sum_{\alpha \in N} x^\alpha$ are indistinct, a contradiction.
  
\emph{Case} 2. We may assume $\langle n+1 \rangle \in N\backslash M$ as the case  $\langle n+1 \rangle \in M\backslash N$  is similar. 
This time, it follows that $(\sigma_{M''}-\sigma_{N''})(b_1-b_2)=b_1-b_2$.
Since the ring has no zero divisors and $b_1-b_2\neq 0_{\mathfrak{A}}$, it can be deduced that $\sigma_{M''}-\sigma_{N''}$ is a multiplicative identity. (First, show that $\sigma_{M''}-\sigma_{N''}$ is a right multiplicative identity.) However, this contradicts the induction hypothesis because $\sum_{\alpha \in M''} x^\alpha$, $\sum_{\alpha \in N''} x^\alpha  \in \MP_{n}$ and they are distinct. 
\end{proof}

\begin{lemma}\label{0816}
Suppose  $(A,+,\cdot)$ is an infinite ring without zero divisors. Then there exists a sequence $\vec{a}\in {^\omega}\! A$ with the property that 
$f(\vec{a}_1)+g(\vec{a}_2)\neq f'(\vec{a}_1')\cdot g'(\vec{a}_2')$ whenever $f,g,f',g'$ are orderly terms over $\{+,\cdot\}$ and $\vec{a}_1\ast\vec{a}_2$  and $\vec{a}_1'\ast\vec{a}_2'$ are finite subsequences of $\vec{a}$.
\end{lemma}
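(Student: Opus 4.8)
The plan is to produce the sequence explicitly as the one furnished by Lemma~\ref{1025b} (shifting the indexing to start at $0$ if one wants a genuine element of ${}^\omega\!A$; this changes nothing) and then translate the desired non-equality into a statement about distinctness of two monotone polynomials, where it becomes a short combinatorial matter. So let $\vec a=\langle a_i\rangle$ be as in Lemma~\ref{1025b}, take orderly terms $f,g,f',g'$ over $\{+,\cdot\}$, and suppose $\vec a_1\ast\vec a_2$ and $\vec a_1'\ast\vec a_2'$ are finite subsequences of $\vec a$; write $I_1,I_2,I_1',I_2'$ for the sets of indices along which $\vec a_1,\vec a_2,\vec a_1',\vec a_2'$ sit inside $\vec a$. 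Because $\vec a_1\ast\vec a_2$ is a subsequence, $I_1\cap I_2=\emptyset$ and $\max I_1<\min I_2$, and likewise $\max I_1'<\min I_2'$. Fix $N$ at least as large as every index used.

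First I would record, using Lemma~\ref{1025a} and relabelling variables along the increasing enumeration of the relevant index set, that $f(\vec a_1)$ is the value at $(a_1,\dots,a_N)$ of a monotone polynomial $P_1\in\MP_N$ whose appearing variables are exactly those indexed by $I_1$ (a monotone monomial stays monotone after an order-preserving relabelling, and distinct monomials stay distinct); similarly $g(\vec a_2),f'(\vec a_1'),g'(\vec a_2')$ give $P_2,P_1',P_2'$ with appearing-variable sets $I_2,I_1',I_2'$. Since orderly terms have arity $\ge1$, Lemma~\ref{1025a}(1) makes each $P_i,P_i'$ a \emph{nonzero} monotone polynomial. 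Using the ring axioms and $\max I_1<\min I_2$, the element $f(\vec a_1)+g(\vec a_2)$ is the value of the monotone polynomial whose monomial set is the disjoint union $P_1\cup P_2$; using associativity and $\max I_1'<\min I_2'$, the element $f'(\vec a_1')\cdot g'(\vec a_2')$ is the value of the monotone polynomial $P_1'\ast P_2'$ (each monomial $\alpha\ast\beta$, $\alpha$ from $P_1'$ and $\beta$ from $P_2'$; these concatenations are strictly increasing and pairwise distinct exactly because the $I_1'$-indices all precede the $I_2'$-indices). This is the same bookkeeping as in the proof of Lemma~\ref{1025a}. By Lemma~\ref{1025b} it now suffices to show that $P_1\cup P_2$ and $P_1'\ast P_2'$ are distinct monotone polynomials.

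For this I would argue by contradiction. Every monomial of $P_1'\ast P_2'$ has the form $\alpha\ast\beta$ with $\alpha,\beta$ nonempty and the variables of $\alpha$ in $I_1'$, of $\beta$ in $I_2'$; so each such monomial uses at least one $I_1'$-variable and at least one $I_2'$-variable. If $P_1\cup P_2=P_1'\ast P_2'$, then a monomial of $P_1$ (exists, as $P_1\ne0$) lies in $P_1'\ast P_2'$, forcing $I_1\cap I_2'\ne\emptyset$; a monomial of $P_2$ similarly forces $I_2\cap I_1'\ne\emptyset$. Picking $v\in I_1\cap I_2'$ and $u'\in I_2\cap I_1'$, we get $v\le\max I_1<\min I_2\le u'$ and $u'\le\max I_1'<\min I_2'\le v$, i.e.\ $v<u'$ and $v>u'$, a contradiction. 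Hence $P_1\cup P_2\ne P_1'\ast P_2'$, and Lemma~\ref{1025b} gives $f(\vec a_1)+g(\vec a_2)\ne f'(\vec a_1')\cdot g'(\vec a_2')$.

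I expect the only real friction to be the first translation step: carefully checking that restricting an orderly term to a subsequence and then expanding the resulting sum or product stays within the class of monotone polynomials, with exactly the claimed monomial sets, and that the product really is nonrepetitive (which uses $\max I_1'<\min I_2'$ and the nonemptiness of $P_1',P_2'$). The conceptual heart is then the tiny observation isolated in the last paragraph: a monotone polynomial that segregates its variables into a ``low block'' summand and a ``high block'' summand can never equal one in which every monomial mixes a low and a high variable.
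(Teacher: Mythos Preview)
Your proof is correct and follows essentially the same route as the paper: take the sequence from Lemma~\ref{1025b}, use Lemma~\ref{1025a} (with reindexing along the subsequence) to represent each of $f(\vec a_1),g(\vec a_2),f'(\vec a_1'),g'(\vec a_2')$ by a monotone polynomial, observe that the sum corresponds to a disjoint union of monomial sets while the product corresponds to a concatenation $M_{f'}\ast M_{g'}$, and conclude by Lemma~\ref{1025b} once these two monotone polynomials are seen to differ. The paper dismisses this last distinctness claim as ``not hard to see''; your contradiction argument via $I_1\cap I_2'\ne\emptyset$ and $I_2\cap I_1'\ne\emptyset$ is a clean way to supply that missing detail.
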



\begin{proof}
Take $\vec{a}$ to be any sequence $\langle a_i \rangle_{i\geq 1}$ having the property as stated in Lemma~\ref{1025b}. Suppose $f,g,f',g'$ 
are orderly terms over $\{+,\cdot\}$. Suppose $\vec{a}_1,\vec{a}_2,\vec{a}_1',\vec{a}_2'$  are finite subsequences of $\vec{a}$, whose length are the arity of $f,g,f',g'$ respectively, such that  $\vec{a}_1\ast\vec{a}_2$  and $\vec{a}_1'\ast\vec{a}_2'$ are subsequences of an initial segment of  $\vec{a}$, say of length $N$.
Say that $\vec{a}_1=\langle a_{i_1}, \dotsc, a_{i_m}\rangle$ and
$\vec{a}_2=\langle a_{j_1}, \dotsc, a_{j_n}\rangle$, where $i_1<\dotsb< i_m <j_1< \dotsb< j_n$.
 By Lemma \ref{1025a}, after reindexing the variables, we have $f(\vec{a}_1)  =  \sum_{\alpha \in M_f} (a_1, a_2, \dotsc, a_N)^\alpha$ for some monotone polynomial $\sum_{\alpha \in M_f} x^\alpha$ such that the variables appearing in it are exactly $x_{i_1}, x_{i_2}, \dotsc, x_{i_m}$
 and $g(\vec{a}_2)  =  \sum_{\alpha \in M_g} (a_1, a_2, \dotsc, a_N)^\alpha$ for some monotone polynomial $\sum_{\alpha \in M_g} x^\alpha$ such that the variables appearing in it are exactly $x_{j_1}, x_{j_1}, \dotsc, x_{j_n}$. Similarly, we obtain $M_{f'}$ and $M_{g'}$. Now, 
$f(\vec{a}_1)+g(\vec{a}_2)=\sum_{\alpha \in M_f\cupdot M_g} (a_1, a_2, \dotsc, a_N)^\alpha$ and
$f'(\vec{a}_1')\cdot g'(\vec{a}_2')=\sum_{\alpha \in M_{f'}\ast M{g'}} (a_1, a_2, \dotsc, a_N)^\alpha$,\linebreak
where  $M_{f'}\ast M_{g'}=\{\alpha_1\ast \alpha_2 \mid \alpha_1 \in M_{f'} \text{ and } \alpha_2 \in M_{g'}  \}$.
It is not hard to see that $\sum_{\alpha \in M_f\cupdot M_g} x^{\alpha}$ and $\sum_{\alpha \in M_{f'}\ast M{g'}}x^{\alpha}$ are distinct monotone polynomials.
Hence, by our choice of $\vec{a}$, $\sum_{\alpha \in M_f\cup M_g} (a_1, \dotsc, a_N)^\alpha \neq \sum_{\alpha \in M_{f'}\ast M{g'}} (a_1, \dotsc, a_N)^\alpha$.
\end{proof}

\begin{theorem}\label{0814}
No infinite ring without zero divisors is a Ramsey algebra. In particular, no infinite integral domain is a Ramsey algebra.
\end{theorem}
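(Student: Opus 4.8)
The plan is to derive a contradiction from the assumption that an infinite ring $(A,+,\cdot)$ without zero divisors is a Ramsey algebra, using the separating sequence constructed in Lemma~\ref{0816}. First I would invoke Lemma~\ref{0816} to fix a sequence $\vec{a}\in{^\omega}\!A$ such that $f(\vec{a}_1)+g(\vec{a}_2)\neq f'(\vec{a}_1')\cdot g'(\vec{a}_2')$ for all orderly terms $f,g,f',g'$ over $\{+,\cdot\}$ and all finite subsequences $\vec{a}_1\ast\vec{a}_2$, $\vec{a}_1'\ast\vec{a}_2'$ of $\vec{a}$. The idea is that this sequence witnesses a set $X$ that no reduction can be homogeneous for.

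The natural candidate colour is $X=\{\,b+c \mid b,c\in\FR_{\{+,\cdot\}}(\vec{a})\,\}$, the set of all sums of two finite reductions of $\vec{a}$. Now suppose toward a contradiction that $(A,+,\cdot)$ is Ramsey; then there is $\vec{b}\leq_{\{+,\cdot\}}\vec{a}$ homogeneous for $X$, i.e. $\FR_{\{+,\cdot\}}(\vec{b})$ is contained in or disjoint from $X$. Since $\vec{b}\leq_{\{+,\cdot\}}\vec{a}$, we have $\FR_{\{+,\cdot\}}(\vec{b})\subseteq\FR_{\{+,\cdot\}}(\vec{a})$, and moreover (by the definition of reduction) we may write $\vec{b}(k)=f_k(\vec{a}_k)$ where $\vec{a}_0\ast\vec{a}_1\ast\dotsb$ is a subsequence of $\vec{a}$. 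Taking two consecutive blocks, $\vec{b}(0)+\vec{b}(1)=f_0(\vec{a}_0)+f_1(\vec{a}_1)$ is a sum of two finite reductions of $\vec{a}$ coming from disjoint blocks, hence lies in $X$; but it is also an element of $\FR_{\{+,\cdot\}}(\vec{b})$ (apply the orderly term $x_1+x_2$ to the subsequence $\langle\vec{b}(0),\vec{b}(1)\rangle$). So $\FR_{\{+,\cdot\}}(\vec{b})\cap X\neq\emptyset$, forcing $\FR_{\{+,\cdot\}}(\vec{b})\subseteq X$. On the other hand, $\vec{b}(0)+\vec{b}(1)$ can also be realized as the product of two finite reductions: one should additionally arrange, using that any reduction of $\vec{a}$ is again a sequence with the separating property, that some element of $\FR_{\{+,\cdot\}}(\vec{b})$ is a product $b'\cdot c'$ of two finite reductions of $\vec{b}$ (e.g. $\vec{b}(0)\cdot\vec{b}(1)$). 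Then $\vec{b}(0)\cdot\vec{b}(1)\in\FR_{\{+,\cdot\}}(\vec{b})\subseteq X$, so $\vec{b}(0)\cdot\vec{b}(1)=b+c$ for some $b,c\in\FR_{\{+,\cdot\}}(\vec{a})$; writing $\vec{b}(0)=f'(\vec{a}_0)$, $\vec{b}(1)=g'(\vec{a}_1)$ and $b=f(\vec{a}_1^{\,*})$, $c=g(\vec{a}_2^{\,*})$ with all these finite subsequences sitting inside a common initial segment of $\vec{a}$, we contradict the defining property of $\vec{a}$ from Lemma~\ref{0816}. The final sentence of the theorem then follows since an integral domain is in particular a ring without zero divisors.

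The step I expect to be the main obstacle is making the bookkeeping of blocks and subsequences precise: one must ensure that the two finite reductions being added (or multiplied) genuinely come from \emph{disjoint} blocks of a single subsequence of $\vec{a}$, so that concatenation makes sense and Lemma~\ref{0816} applies, and simultaneously that the resulting element is exhibited as a bona fide finite reduction of $\vec{b}$ via an orderly term (here $x_1+x_2$ or $x_1x_2$, which are orderly over $\{+,\cdot\}$ as noted in the proof of Lemma~\ref{1025a}). A clean way to handle this is to note that $\vec{b}\leq_{\{+,\cdot\}}\vec{a}$ means $\vec{b}$ itself also satisfies the conclusion of Lemma~\ref{0816} is \emph{not} needed; rather, it suffices that $\vec{b}(0)+\vec{b}(1)$ and $\vec{b}(0)\cdot\vec{b}(1)$ both lie in $\FR_{\{+,\cdot\}}(\vec{b})$ while, back in $\vec{a}$, the former is a sum of two block-reductions and the latter is a product of two block-reductions — and these two ring elements are forced to be equal to elements of $X$, hence each is simultaneously a sum \emph{and} (being in $X$) a sum of finite reductions of $\vec{a}$, so we compare $\vec{b}(0)\cdot\vec{b}(1)$, written as a product of two reductions of $\vec{a}$, with its representation as a sum of two reductions of $\vec{a}$; this is exactly the inequality ruled out by Lemma~\ref{0816}. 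Once the indexing is set up carefully, the contradiction is immediate and the proof is short.
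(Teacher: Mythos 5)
Your overall strategy is the paper's: fix the separating sequence $\vec{a}$ from Lemma~\ref{0816}, colour by a set $X$ of ``sums,'' observe that for any $\vec{b}\leq_{\{+,\cdot\}}\vec{a}$ both $\vec{b}(0)+\vec{b}(1)$ and $\vec{b}(0)\cdot\vec{b}(1)$ lie in $\FR_{\{+,\cdot\}}(\vec{b})$ (via the orderly terms $+$ and $\cdot$ applied to the subsequence $\langle\vec{b}(0),\vec{b}(1)\rangle$), and play the sum against the product. However, there is a genuine gap in your choice of $X$. You take $X=\{\,b+c\mid b,c\in\FR_{\{+,\cdot\}}(\vec{a})\,\}$, the sums of \emph{arbitrary} pairs of finite reductions. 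The inclusion $\vec{b}(0)+\vec{b}(1)\in X$ is then fine, but the crucial exclusion $\vec{b}(0)\cdot\vec{b}(1)\notin X$ is not delivered by Lemma~\ref{0816}: if $\vec{b}(0)\cdot\vec{b}(1)=b+c$ with $b=f(\vec{a}_1^{\,*})$ and $c=g(\vec{a}_2^{\,*})$, the supports of $\vec{a}_1^{\,*}$ and $\vec{a}_2^{\,*}$ may overlap or interleave, so $\vec{a}_1^{\,*}\ast\vec{a}_2^{\,*}$ need not be a finite subsequence of $\vec{a}$, and the lemma (which only forbids $f(\vec{a}_1)+g(\vec{a}_2)=f'(\vec{a}_1')\cdot g'(\vec{a}_2')$ when both $\vec{a}_1\ast\vec{a}_2$ and $\vec{a}_1'\ast\vec{a}_2'$ are subsequences of $\vec{a}$) simply does not apply. ``Sitting inside a common initial segment,'' which is what you invoke, is strictly weaker than being a subsequence in concatenated form. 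You sense this difficulty in your final paragraph but your proposed resolution still compares the product against a representation as a sum of two unconstrained reductions, so the gap remains.

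The fix is exactly the paper's definition: let $X$ be the set of elements of the form $f(\vec{a}_1)+g(\vec{a}_2)$ where $f,g$ are orderly terms over $\{+,\cdot\}$ and $\vec{a}_1\ast\vec{a}_2$ \emph{is} a finite subsequence of $\vec{a}$. Membership of $\vec{b}(0)+\vec{b}(1)$ in $X$ still holds, since by the definition of reduction $\vec{b}(0)=f_0(\vec{a}_0)$ and $\vec{b}(1)=f_1(\vec{a}_1)$ with $\vec{a}_0\ast\vec{a}_1$ a finite subsequence of $\vec{a}$; and $\vec{b}(0)\cdot\vec{b}(1)=f_0(\vec{a}_0)\cdot f_1(\vec{a}_1)\notin X$ is now literally the conclusion of Lemma~\ref{0816}. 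With this $X$ the argument is direct (no need for the detour through ``homogeneity forces $\FR_{\{+,\cdot\}}(\vec{b})\subseteq X$''): every reduction $\vec{b}$ has a finite reduction in $X$ and one in $X^c$, hence is not homogeneous.
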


\begin{proof}
Suppose $(A, \mathcal{F})$  is an  infinite  ring without zero divisors, where $\mathcal{F}=\{+,\cdot\}$.
Choose a sequence $\vec{a}$ having the property stated in Lemma~\ref{0816}. Let $X$ be the set of all elements in $A$ of the form $f(\vec{a}_1)+g(\vec{a}_2)$, where $f$ and $g$ are orderly terms over $\mathcal{F}$ and $\vec{a}_1\ast\vec{a}_2$ is a finite subsequence of $\vec{a}$. Suppose $\vec{b} \leq_{\mathcal{F}} \vec{a}$. Then by the definition of reduction, 
$\vec{b}(0)=f(\vec{a}_1   )$ and $\vec{b}(1)= g( \vec{a}_2 )$ for some orderly terms $f,g$ over $\mathcal{F}$ and finite sequences $\vec{a}_1,\vec{a}_2$ such that
$\vec{a}_1\ast \vec{a}_2$ is a finite subsequence of $\vec{a}$. Hence, $\vec{b}(0)+\vec{b}(1) \in X$. On the other hand, $\vec{b}(0)\cdot \vec{b}(1) =  f(\vec{a}_1  )\cdot g( \vec{a}_2 )\notin X$ by the choice of $\vec{a}$. 
However, both $\vec{b}(0)+\vec{b}(1) $ and $\vec{b}(0)\cdot \vec{b}(1) $ are finite reductions of $\vec{b}$.
It follows that neither $\FR_{\mathcal{F}}(\vec{b}) \subseteq X$ nor $\FR_{\mathcal{F}}(\vec{b}) \subseteq X^c$. Hence, no reduction of $\vec{a}$ is homogeneous for $X$.
Therefore, $(A, \mathcal{F})$ is not a Ramsey algebra.
\end{proof}

\begin{corollary}
No infinite ring with multiplicative identity having characteristic zero is a Ramsey algebra.
\end{corollary}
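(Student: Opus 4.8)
The plan is to avoid applying Theorem~\ref{0814} directly to $(A,+,\cdot)$ — which would not work, since a ring of characteristic zero may perfectly well have zero divisors (for instance $\mathbb{Z}\times\mathbb{Z}$ with componentwise operations has a multiplicative identity and characteristic zero, yet $(1,0)(0,1)=(0,0)$) — and instead to pass to a carefully chosen subalgebra. The relevant fact is the one already recorded in the excerpt: every subalgebra of a Ramsey algebra is again a Ramsey algebra. So it suffices to exhibit, inside an arbitrary infinite ring $A$ with multiplicative identity $1_A$ and characteristic zero, a subalgebra that Theorem~\ref{0814} shows is not Ramsey.

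The natural candidate is $B=\{\,n\,1_A\mid n\in\mathbb{Z}\,\}$, the set of integer multiples of the identity. First I would check that $B$ is closed under both ring operations, via $n\,1_A+m\,1_A=(n+m)\,1_A$ and $(n\,1_A)(m\,1_A)=(nm)\,1_A$, so that $(B,+\!\upharpoonright\! B,\cdot\!\upharpoonright\! B)$ is a subalgebra of $(A,+,\cdot)$ in the sense of the paper. Next I would note that $n\mapsto n\,1_A$ is a ring homomorphism $\mathbb{Z}\to A$ with image $B$, which is injective precisely because $A$ has characteristic zero; hence $B$ is infinite and is isomorphic, as a ring and therefore as an algebra, to $(\mathbb{Z},+,\cdot)$.

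Since $(\mathbb{Z},+,\cdot)$ is an infinite integral domain, Theorem~\ref{0814} tells us it — and hence the isomorphic algebra $(B,+\!\upharpoonright\! B,\cdot\!\upharpoonright\! B)$ — is not a Ramsey algebra. Were $(A,+,\cdot)$ a Ramsey algebra, its subalgebra $B$ would be one too, a contradiction; therefore $(A,+,\cdot)$ is not Ramsey. I do not expect a genuine obstacle here; the only point that requires care is the one flagged at the outset, namely that Theorem~\ref{0814} must be invoked on the subring generated by $1_A$, where freedom from zero divisors is automatic, rather than on $A$ itself.
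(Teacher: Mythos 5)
Your proposal is correct and follows essentially the same route as the paper's own proof: identify the subring generated by $1_A$ with $(\mathbb{Z},+,\cdot)$ using characteristic zero, apply Theorem~\ref{0814} to that infinite integral domain, and conclude via the fact that subalgebras of Ramsey algebras are Ramsey. You merely spell out the details (closure of $B$, injectivity of $n\mapsto n\,1_A$) that the paper leaves implicit.
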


\begin{proof}
If a ring has a multiplicative identity and has characteristic zero, then we can identify $(\mathbb{Z},+,\times)$ as a subring, and hence as a subalgebra. By Theorem~\ref{0814}, $(\mathbb{Z},+,\times)$ is not a Ramsey algebra. 
Since every subalgebra of a Ramsey algebra is Ramsey, the result follows.
\end{proof}


\begin{example}
The assumption that the ring has no zero divisors in Theorem~\ref{0814} is necessary. 
Let  $\mathfrak{A}=( \oplus_{n=1}^{\infty} \mathbb{F}_2 , +_{\mathfrak{A}},\cdot_{\mathfrak{A}}   )$  be the infinite direct sum  of the field of two elements $\mathbb{F}_2=\{0,1\}$. Clearly, $\mathfrak{A}$ is  an  infinite ring with zero divisors, and its zero element $0_{\mathfrak{A}}$ is $(0,0,\dotsc)$. 
We will show that $\langle 0_{\mathfrak{A}}, 0_{\mathfrak{A}},  \dotsc \rangle$ is a reduction of every $\vec{a}\in {^\omega} (\oplus_{n=1}^{\infty} \mathbb{F}_2)$. Hence, $\mathfrak{A}$ is in  fact a degenerate Ramsey algebra.
Suppose  $\vec{a} \in {^\omega} (\oplus_{n=1}^{\infty} \mathbb{F}_2)$. By the definition of direct sum, the nonzero entries of $\vec{a}(0)$ all appear within the first $N$  entries for some $N$. By the pigeonhole principle, there are $0<i_0<j_0$ such that $\vec{a}(i_0)$ and $\vec{a}(j_0)$  agree on the first $N$ entries. It is then clear that  $\vec{a}(0)\cdot_{\mathfrak{A}} (\vec{a}(i_0) +_{\mathfrak{A}} \vec{a}(j_0))=0_{\mathfrak{A}}$. 
Arguing similarly with $\vec{a}(j_0+1)$ taking the role of  $\vec{a}(0)$, there are $j_0+1 <i_1<j_1$ such that $\vec{a}(j_0+1) \cdot_{\mathfrak{A}}(\vec{a}(i_1) +_{\mathfrak{A}} \vec{a}(j_1))=0_{\mathfrak{A}}$. Proceed inductively. Since the operation $f(x,y,z)=x\cdot_{\mathfrak{A}} (y+_{\mathfrak{A}}z)$ is an orderly term over $\{+_{\mathfrak{A}},\cdot_{\mathfrak{A}}\}$, therefore, $\langle 0_{\mathfrak{A}}, 0_{\mathfrak{A}}, \dotsc \rangle$ is a reduction of $\vec{a}$.
\end{example}

\section{Ramsey Spaces}\label{220813a}
 
Here the definition of Ramsey spaces will be provided in a form that serves our purpose.
This equivalent version was already pointed out in \cite{tC88}.
Our terminology here follows closely that provided there.

\begin{definition}
A \emph{preorder with approximations}\footnote{In \cite{tC88} a preorder was called  a pre-partial order. Hence, a preorder with approximations was originally called a pre-partial order with appromixations by Carlson.} is a pair
$\mathfrak{R}=(R, \leq)$ such that $R$ is a nonempty set of infinite sequences and $\leq$ is a preorder on $R$. 
If $\vec{a},\vec{b} \in R$ and $\vec{b} \leq \vec{a}$, then $\vec{b}$ is called a \emph{reduction} of $\vec{a}$ with respect to $\mathfrak{R}$. 
\end{definition}


\begin{definition}
Suppose $\mathfrak{R}=(R,\leq)$ is a preorder with approximations. For $n \in \omega$ and $\vec{a}\in R$, define 
$$[n,\vec{a}]_{\mathfrak{R}}=\{\,\vec{b}\in R\mid \vec{b}\leq \vec{a} \text{ and } \vec{b}\!\upharpoonright \! n=\vec{a}\!\upharpoonright \! n\,\}.$$
The \emph{natural topology} on $\mathfrak{R}$ is the topology generated by the sets $[n,\vec{a}]_{\mathfrak{R}}$.
\end{definition}

\begin{example}
Suppose $R$ is the set of strictly increasing infinite sequences of natural numbers and define $\vec{a}\leq \vec{b}$ if{f} $\vec{a}$ is a subsequence of $\vec{b}$ for every $\vec{a}, \vec{b}\in R$. Via identification of infinite subsets of natural numbers with elements of $R$, the preorder with approximations $(R,\leq)$, endowed with the natural topology, is the so-called Ellentuck's space.
\end{example}

\begin{remark}\label{1706a}
Suppose $(R, \leq)$ is a preorder with approximations and $T$ is a subset of $R$. Abusing notation, obviously $\mathfrak{T}=(T, \leq )$ is a subpreorder with approximations, where the latter $\leq$ is the restriction of the former preorder $\leq$ on $R$ to $T$. The natural topology on $\mathfrak{T}$ is in fact the same as the subspace topology on $T$ induced from the natural topology on $\mathfrak{R}$. 
\end{remark}

\begin{definition}
Suppose $\mathfrak{R}= (R, \leq)$ is a preorder with approximations. Assume $X$ is a subset of $R$. We say that $X$ is \emph{Ramsey} if{f} for every $n \in \omega$ and $\vec{a} \in R$, there exists $\vec{b} \in [n,\vec{a}]_{\mathfrak{R}}$ such that $[n,\vec{b}]_{\mathfrak{R}}$ is either contained in or disjoint from $X$. Assuming the Axiom of Choice, $\mathfrak{R}$ is a \emph{Ramsey space} if{f} every subset of $\mathfrak{R}$ (endowed with the natural topology) which has the property of Baire is Ramsey.
\end{definition} 

Here are preorders with approximations induced by algebras that concern us.

\begin{definition}
Suppose $(A,\mathcal{F})$ is an algebra. Define $\mathscr{R}(A,\mathcal{F})$ to be the preorder with approximations $({^\omega}\!A, \leq_{\mathcal{F}})$ and $\mathscr{R}_{\vec{a}}(A,\mathcal{F})$ 
to be  the subpreorder with approximations $(\{\, \vec{b}\in {^\omega}\!A\mid \vec{b} \leq_{\mathcal{F}}\vec{a}\,\},\leq_{\mathcal{F}})$ for each $\vec{a}\in {^\omega}\!A$.
\end{definition}

Suppose $\mathfrak{R} =(R, \leq)  $ is a preorder with approximations. Generally, a subspace of $\mathfrak{R}$ induced by a subset $T$ of $R$ need not be Ramsey when $\mathfrak{R}$ is a Ramsey space. However, $\mathfrak{R}$ is a Ramsey space if and only if every subspace of 
$\mathfrak{R}$ induced by a basic open set $[n,\vec{a}]_{\mathfrak{R}}$ is Ramsey. 
Particularly, if $\mathscr{R}(A, \mathcal{F})$ is a Ramsey space, then $\mathscr{R}_{\vec{a}}(A, \mathcal{F})$ is a Ramsey space for every $\vec{a} \in {^\omega}\!A$. The converse also holds and can be strengthened to the following proposition, noting that assuming $\mathscr{R}_{\vec{b}}(A,\mathcal{F})$ is a Ramsey space and  $\vec{b} \leq_{\mathcal{F}} \vec{a}$, it  does not necesarily follow that $\mathscr{R}_{\vec{a}}(A,\mathcal{F})$ is a Ramsey space.

\begin{proposition}\label{1404a}
Suppose $(A,\mathcal{F})$ is an algebra. Assume that
for every $\vec{a} \in {^\omega}\!A$, there exists $\vec{b} \leq_{\mathcal{F}} \vec{a}$ such that $\mathscr{R}_{\vec{b}}(A,\mathcal{F})$ is a Ramsey space.
Then $\mathscr{R}(A,\mathcal{F})$ is a Ramsey space.
\end{proposition}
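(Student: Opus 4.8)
The plan is to leverage the general fact that a preorder with approximations is a Ramsey space if and only if all of its subspaces induced by basic open sets $[n,\vec{a}]$ are Ramsey spaces — this is precisely the reduction invoked in the paragraph preceding the proposition. So the task reduces to showing that for every $n\in\omega$ and every $\vec{a}\in{}^{\omega}\!A$, the subspace $([n,\vec{a}]_{\mathscr{R}(A,\mathcal{F})},\leq_{\mathcal{F}})$ is a Ramsey space, given the hypothesis that every $\vec{a}$ has a reduction $\vec{b}$ with $\mathscr{R}_{\vec{b}}(A,\mathcal{F})$ a Ramsey space.

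First I would use the hypothesis to pick, for the given $\vec{a}$, a reduction $\vec{b}\leq_{\mathcal{F}}\vec{a}$ with $\mathscr{R}_{\vec{b}}(A,\mathcal{F})$ a Ramsey space. The key point is to relate the relevant basic open set $[n,\vec{a}]_{\mathscr{R}(A,\mathcal{F})}$ to a basic open set inside $\mathscr{R}_{\vec{b}}(A,\mathcal{F})$. One cannot directly pass to $[n,\vec{b}]$ since $\vec{b}\!\upharpoonright\! n$ need not equal $\vec{a}\!\upharpoonright\! n$; instead I would observe that any $\vec{c}\in[n,\vec{a}]_{\mathscr{R}(A,\mathcal{F})}$ satisfies $\vec{c}\leq_{\mathcal{F}}\vec{a}$ and agrees with $\vec{a}$ on the first $n$ coordinates, and — appealing to the structure of $\leq_{\mathcal{F}}$ together with transitivity — show one can find, inside $\mathscr{R}_{\vec{b}}(A,\mathcal{F})$, a tail of $\vec{b}$ which together with the fixed initial segment $\vec{a}\!\upharpoonright\! n$ realizes the same basic open set. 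More precisely, since $\mathscr{R}_{\vec{b}}(A,\mathcal{F})$ being a Ramsey space implies every subspace induced by one of its basic open sets $[m,\vec{d}]$ is a Ramsey space, and since $[n,\vec{a}]_{\mathscr{R}(A,\mathcal{F})}$, as a subspace, can be identified (via Remark~\ref{1706a} and the subspace-topology compatibility) with such a subspace of $\mathscr{R}_{\vec{b}}(A,\mathcal{F})$ — using that reductions of $\vec{a}$ with initial segment $\vec{a}\!\upharpoonright\! n$ coincide, up to the first $n$ terms, with reductions lying below $\vec{b}$ — the Ramsey-space property transfers.

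After that identification, the argument is formal: the property of Baire is a topological notion preserved under passing to subspaces that are open (or more generally under the identifications above), and "$X$ is Ramsey in $\mathfrak{R}$" was defined so that it is witnessed within each $[n,\vec{a}]$; so Ramsey-ness of every basic-open subspace forces $\mathscr{R}(A,\mathcal{F})$ itself to be a Ramsey space. I would spell out the quantifier bookkeeping: given $X\subseteq{}^{\omega}\!A$ with the property of Baire, $n\in\omega$, and $\vec{a}$, restrict $X$ to $[n,\vec{a}]$, transport to the corresponding basic-open subspace of $\mathscr{R}_{\vec{b}}(A,\mathcal{F})$ (where $X$ restricted still has the property of Baire by Remark~\ref{1706a}), apply that this subspace is a Ramsey space to get the desired $\vec{c}$, and transport back.

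The main obstacle I anticipate is the identification in the second paragraph: making precise that the subspace $[n,\vec{a}]_{\mathscr{R}(A,\mathcal{F})}$ really does sit inside $\mathscr{R}_{\vec{b}}(A,\mathcal{F})$ as a basic-open subspace, despite $\vec{b}\!\upharpoonright\! n\neq\vec{a}\!\upharpoonright\! n$ in general. The resolution is that every element of $[n,\vec{a}]_{\mathscr{R}(A,\mathcal{F})}$ is a reduction of $\vec{a}$ agreeing with $\vec{a}$ on $n$ coordinates, and $\vec{b}$ being a reduction of $\vec{a}$, one can splice $\vec{a}\!\upharpoonright\! n$ onto an appropriate tail of $\vec{b}$ to get a sequence $\vec{b}'\leq_{\mathcal{F}}\vec{a}$ with $\vec{b}'\!\upharpoonright\! n=\vec{a}\!\upharpoonright\! n$ and $\mathscr{R}_{\vec{b}'}(A,\mathcal{F})$ still a Ramsey space; then $[n,\vec{a}]_{\mathscr{R}(A,\mathcal{F})}\supseteq[n,\vec{b}']_{\mathscr{R}(A,\mathcal{F})}$, and one checks that for the purpose of the Ramsey-subset definition it suffices to work within the latter. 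Everything else is routine topological and combinatorial bookkeeping using transitivity of $\leq_{\mathcal{F}}$ and Remark~\ref{1706a}.
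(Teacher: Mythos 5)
Your overall strategy (splice the initial segment $\vec{a}\!\upharpoonright\! n$ onto a suitable reduction and transfer the Ramsey-space property through the identification) is the same as the paper's, but there is a genuine gap at the crucial step. You apply the hypothesis to $\vec{a}$ to get $\vec{b}\leq_{\mathcal{F}}\vec{a}$ with $\mathscr{R}_{\vec{b}}(A,\mathcal{F})$ Ramsey, then form $\vec{b}'=\vec{a}\!\upharpoonright\! n\ast(\vec{b}-m)$ and assert that $\mathscr{R}_{\vec{b}'}(A,\mathcal{F})$ is \emph{still} a Ramsey space. That assertion does not follow from anything you have. The underlying set of $\mathscr{R}_{\vec{b}'}(A,\mathcal{F})$ is the full reduction cone $\{\vec{c}\mid\vec{c}\leq_{\mathcal{F}}\vec{b}'\}$, which contains sequences obtained by applying orderly terms to blocks that \emph{mix} entries of $\vec{a}\!\upharpoonright\! n$ with entries of the tail; such sequences are not reductions of $\vec{b}$, so $\mathscr{R}_{\vec{b}'}(A,\mathcal{F})$ is neither a subspace of $\mathscr{R}_{\vec{b}}(A,\mathcal{F})$ nor homeomorphic to it in any evident way. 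This is precisely the kind of upward transfer the paper warns against in the paragraph before the proposition ($\mathscr{R}_{\vec{b}}$ Ramsey and $\vec{b}\leq_{\mathcal{F}}\vec{a}$ do not give $\mathscr{R}_{\vec{a}}$ Ramsey); here $\vec{b}-m$ is a reduction of $\vec{b}'$, so you are in essentially that situation.

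The paper avoids this in two ways. First, it applies the hypothesis to the tail $\vec{a}-n$ directly, getting $\vec{b}\leq_{\mathcal{F}}\vec{a}-n$ with $\mathscr{R}_{\vec{b}}(A,\mathcal{F})$ Ramsey. Second — and this is the point your proposal misses — it never claims anything about the full cone below the spliced sequence; it works with the sub\emph{pre}order $\mathfrak{T}$ whose underlying set is only $\{\vec{a}\!\upharpoonright\! n\ast\vec{d}\mid\vec{d}\leq_{\mathcal{F}}\vec{b}\}$, i.e.\ the image of $\mathscr{R}_{\vec{b}}(A,\mathcal{F})$ under the homeomorphism $H(\vec{d})=\vec{a}\!\upharpoonright\! n\ast\vec{d}$, which is in general a proper subset of $\{\vec{c}\mid\vec{c}\leq_{\mathcal{F}}\vec{a}\!\upharpoonright\! n\ast\vec{b}\}$. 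The Ramsey-space property transports through $H$ to $\mathfrak{T}$ for free, and the argument closes because the \emph{ambient} basic open sets one needs, namely $[n,\vec{a}\!\upharpoonright\! n\ast\vec{c}]_{\mathscr{R}(A,\mathcal{F})}$, are exactly the $H$-images $\{\vec{a}\!\upharpoonright\! n\ast\vec{d}\mid\vec{d}\leq_{\mathcal{F}}\vec{c}\}$ of basic open sets of $\mathscr{R}_{\vec{b}}(A,\mathcal{F})$: fixing the first $n$ coordinates forces all reduction blocks for later positions to be drawn from the tail. To repair your proof, replace the claim ``$\mathscr{R}_{\vec{b}'}(A,\mathcal{F})$ is still a Ramsey space'' by this restriction to $\mathfrak{T}$ together with the identification of basic open sets; as written, the step fails.
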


\begin{proof}
Suppose $X \subseteq {^\omega}\!A$ has the property of Baire in the natural topology on $\mathscr{R}(A,\mathcal{F})$.
Fix $n \in \omega$ and $\vec{a} \in {^\omega}\!A$.
By the hypothesis, choose $\vec{b}  \leq_{\mathcal{F}} \vec{a}-n$ such that $\mathscr{R}_{\vec{b}}(A,\mathcal{F})$ is a Ramsey space.
Let $\mathfrak{T}$ denote the subpreorder with approximations $(  \{\,\vec{a}\!\upharpoonright \! n \ast \vec{d}\mid \vec{d}\leq_{\mathcal{F}}\vec{b}\,\}, \leq_{\mathcal{F}})$ of $\mathscr{R}(A,\mathcal{F})$. It can be verified (but tedious) that the map $H \colon \{\, \vec{d}\in {^\omega}\!A\mid \vec{d} \leq_{\mathcal{F}}\vec{b}\,\} \rightarrow \{\,\vec{a}\!\upharpoonright \! n \ast \vec{d}\mid \vec{d}\leq_{\mathcal{F}}\vec{b}\,\}$ defined by
$H(\vec{d})=\vec{a}\!\upharpoonright \! n\ast\vec{d}$ is a homeomorphism between $\mathscr{R}_{\vec{b}}(A,\mathcal{F})$ and $\mathfrak{T}$ under the natural topology.

Let $T$ denote $\{\,\vec{a}\!\upharpoonright \! n \ast \vec{d}\mid \vec{d}\leq_{\mathcal{F}}\vec{b}\,\}$.  By elementary topology, $X \cap T$ has the property of Baire in the  subspace topology on $T$ induced from the natural topology on $\mathfrak{R}(A,\mathcal{F})$. Hence, by Remark~\ref{1706a}, $X\cap T$ has the property of Baire in the natural topology on $\mathfrak{T}$.
By the homeomorphism, $H^{-1}[X\cap T]$ has the property of Baire in the natural topology on $\mathscr{R}_{\vec{b}}(A,\mathcal{F})$.
Since $\mathscr{R}_{\vec{b}}(A,\mathcal{F})$ is a Ramsey space, there exists $\vec{c}\in [0, \vec{b}]_{\mathscr{R}_{\vec{b}}(A,\mathcal{F})} $ such that 
$[0, \vec{c}]_{\mathscr{R}_{\vec{b}}(A,\mathcal{F})}$ is either contained in or disjoint from $H^{-1}[X\cap T]$. 

Clearly, $\vec{a}\!\upharpoonright \! n\ast \vec{c} \leq_{\mathcal{F}} \vec{a}$ because
$\vec{c}\leq_{\mathcal{F}}\vec{b}  \leq_{\mathcal{F}} \vec{a}-n$.
Hence, $\vec{a}\!\upharpoonright \! n\ast \vec{c} \in [n,\vec{a}]_{\mathscr{R}(A,\mathcal{F})}$.
It remains to show that $[n,\vec{a}\!\upharpoonright \! n\ast \vec{c}]_{\mathscr{R}(A,\mathcal{F})}$ is either contained in or disjoint for $X$.
Now, $H([0, \vec{c}]_{\mathscr{R}_{\vec{b}}(A,\mathcal{F})}    )$ is either contained in or disjoint from $X\cap T$ and thus from $X$ because $H([0, \vec{c}]_{\mathscr{R}_{\vec{b}}(A,\mathcal{F})}    )\subseteq T$.
The proof is complete since $H([0, \vec{c}]_{\mathscr{R}_{\vec{b}}(A,\mathcal{F})}    )
= H( \{\, \vec{d}\in {^\omega}\!A \mid \vec{d}\leq_{\mathcal{F}}\vec{c}    \}      )
= \{\,\vec{a}\!\upharpoonright \! n \ast \vec{d}\mid \vec{d}\leq_{\mathcal{F}}\vec{c}\,\}
=[n, \vec{a}\!\upharpoonright \! n\ast \vec{c}]_{\mathscr{R}(A,\mathcal{F})}
$. (The equality of the corresponding sets in the last sentence can be shown using some elementary properties of $\leq_{\mathcal{F}}$, for examples, transitivity of $\leq_{\mathcal{F}}$ and if $\vec{d}\leq_{\mathcal{F}}\vec{c}$, then
$\vec{d}-n\leq_{\mathcal{F}}\vec{c}-n$ for every $n\in \omega$.)
\end{proof}

\begin{remark}\label{130813a}
If $\mathscr{R}(A, \mathcal{F})$ is a Ramsey space, then $(A, \mathcal{F})$  is immediately a Ramsey algebra.
This follows mainly  because every subset of $A$ induces a clopen set in $\mathscr{R}(A, \mathcal{F})$.
\end{remark}

As mentioned in the introduction, the abstract Ellentuck's Theorem by Carlson sorts out Ramsey spaces in terms of some combinatorial properties. In the case of $\mathscr{R}(A, \mathcal{F})$, it boils down to the following theorem.

\begin{theorem}[Carlson]\label{160713}
Suppose $\mathcal{F}$ is a finite collection of operations on a set $A$, none of which is unary. Then $\mathscr{R}(A, \mathcal{F})$ is a Ramsey space if and only if $(A, \mathcal{F})$  is a Ramsey algebra. 
\end{theorem}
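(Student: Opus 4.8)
The plan is to prove Theorem~\ref{160713} by establishing both directions, with the easy direction being an immediate consequence of Remark~\ref{130813a}. That remark already records that if $\mathscr{R}(A,\mathcal{F})$ is a Ramsey space, then $(A,\mathcal{F})$ is a Ramsey algebra, since each $X\subseteq A$ induces the clopen (hence property-of-Baire) set $\{\,\vec{b}\in{^\omega}\!A\mid \vec{b}(0)\in X\,\}$, and the Ramsey property of that set at $n=0$ unwinds to the existence of a homogeneous reduction. So the entire content of the theorem is the forward direction: assuming $(A,\mathcal{F})$ is a Ramsey algebra (with $\mathcal{F}$ finite and containing no unary operations), show $\mathscr{R}(A,\mathcal{F})$ is a Ramsey space.

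For that direction, the strategy is to invoke Carlson's abstract Ellentuck theorem from \cite{tC88}: a preorder with approximations is a Ramsey space provided it satisfies a list of axioms (in Carlson's formulation, roughly: the approximation/amalgamation axioms A.1--A.3 governing how finite approximations extend and combine, a finitization or ``Ramsey-for-open-sets'' type axiom, and a pigeonhole-style axiom which is precisely where the combinatorics of the underlying structure enters). The first block of axioms is structural and holds for $\mathscr{R}(A,\mathcal{F})=({^\omega}\!A,\leq_{\mathcal{F}})$ essentially by the bookkeeping definition of $\leq_{\mathcal{F}}$ via finite sequences $\vec{a}_k$ and orderly terms $f_k$ — one checks that $[n,\vec{a}]_{\mathscr{R}(A,\mathcal{F})}$ behaves correctly under restriction and concatenation (here the hypothesis that no operation in $\mathcal{F}$ is unary is used to ensure that each block $\vec{a}_k$ has length $\geq 2$, so that producing $\vec{b}(k)$ genuinely consumes finitely many terms and finite approximations of reductions are well-behaved; with a unary operation one could have a reduction that is term-by-term a relabelling and the approximation structure degenerates). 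The pigeonhole axiom, after unravelling Carlson's definitions in the homogeneous case of $\mathscr{R}(A,\mathcal{F})$, reduces exactly to the statement: for every $\vec{a}\in{^\omega}\!A$ and every $X\subseteq A$ there is $\vec{b}\leq_{\mathcal{F}}\vec{a}$ with $\FR_{\mathcal{F}}(\vec{b})$ contained in or disjoint from $X$ — i.e., to $(A,\mathcal{F})$ being a Ramsey algebra. Then one concludes by Carlson's theorem.

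Concretely the key steps, in order, are: (1) dispatch $(\Leftarrow)$ via Remark~\ref{130813a}; (2) state precisely the version of Carlson's abstract Ellentuck theorem being used and the axioms it requires; (3) verify the structural axioms for $\mathscr{R}(A,\mathcal{F})$, citing the definitions of $\leq_{\mathcal{F}}$, $[n,\vec{a}]_{\mathscr{R}(A,\mathcal{F})}$, and flagging where ``no unary operations'' and ``$\mathcal{F}$ finite'' are invoked (finiteness feeds the finitization axiom: only finitely many orderly terms of each arity bound, so that for a fixed finite stock of approximations the relevant set of one-step extensions is finite); (4) show the pigeonhole axiom is equivalent to the Ramsey algebra hypothesis; (5) apply the theorem and also note (via Proposition~\ref{1404a}, if needed for localization) that one gets $\mathscr{R}(A,\mathcal{F})$ Ramsey rather than merely some $\mathscr{R}_{\vec{b}}(A,\mathcal{F})$.

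The main obstacle I expect is step (3)--(4): faithfully translating Carlson's axiom system from \cite{tC88} — which is stated for arbitrary (heterogeneous) algebras inducing spaces of infinite sequences with all projections present — into the present streamlined setting where $\leq_{\mathcal{F}}$ omits projections (cf.\ Remark~\ref{3001a}), and checking that this omission does not break the axioms. In particular the amalgamation axiom, which asks that two reductions extending a common finite approximation can be further reduced to a common one, requires care because a finite approximation $\vec{a}\!\upharpoonright\! n$ together with $\vec{b}\leq_{\mathcal{F}}\vec{a}-n$ must be recombinable; verifying this is where the bulk of the ``tedious but routine'' work lies, and where the non-unary hypothesis is genuinely needed so that the block decomposition witnessing a reduction is unambiguous enough to amalgamate. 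Once the axioms are in place, the pigeonhole step is then a direct restatement, and the theorem follows.
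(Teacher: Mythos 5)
Your proposal takes essentially the same route as the paper: the paper's entire proof of this theorem is the single sentence that it is a special case of Lemma~4.14 of \cite{tC88}, i.e., it delegates to Carlson's abstract Ellentuck machinery exactly the axiom-verification you outline (backward direction via Remark~\ref{130813a}, forward direction by checking the structural and pigeonhole axioms). One small correction to your justification: the non-unary hypothesis does not force each block $\vec{a}_k$ to have length at least $2$ (the identity is always an orderly term, so length-one blocks occur), rather its role, as the paper notes in the discussion following Theorem~\ref{160713a}, is to guarantee together with the finiteness of $\mathcal{F}$ that every finite sequence has only finitely many reductions, which is the finitization criterion of the abstract Ellentuck theorem.
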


\begin{proof}
This theorem is a special case of Lemma 4.14 in \cite{tC88}.
\end{proof}

Analogously, the following can be shown, which we state without proof.

\begin{theorem}\label{1129b}
Suppose $\mathcal{F}$ is a finite collection of operations on a set $A$, none of which is unary, and  $\vec{a}\in {^\omega}\!A$. Then $\mathscr{R}_{\vec{a}}(A, \mathcal{F})$ is a Ramsey space if and only if $(A, \mathcal{F})$  is Ramsey below $\vec{a}$. 
\end{theorem}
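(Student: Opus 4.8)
The plan is to mirror the proof structure of Theorem~\ref{160713} and Proposition~\ref{1404a}, since Theorem~\ref{1129b} is precisely the ``localized'' analogue of the former, packaged as a statement about the subpreorder $\mathscr{R}_{\vec{a}}(A,\mathcal{F})$ rather than the full space. First I would establish the forward direction: assuming $\mathscr{R}_{\vec{a}}(A,\mathcal{F})$ is a Ramsey space, I want to show $(A,\mathcal{F})$ is Ramsey below $\vec{a}$. The point here is the same observation recorded in Remark~\ref{130813a}: for any $X\subseteq A$, the set $\{\,\vec{c}\in\,{^\omega}\!A\mid \vec{c}(0)\in X\,\}$ is clopen in the natural topology (its complement being the analogous set for $X^c$), so it certainly has the property of Baire, and its intersection with $\{\,\vec{c}\mid\vec{c}\leq_{\mathcal{F}}\vec{a}\,\}$ has the property of Baire in $\mathscr{R}_{\vec{a}}(A,\mathcal{F})$ by Remark~\ref{1706a}. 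Applying the Ramsey-space hypothesis with $n=0$ to a given $\vec{b}\leq_{\mathcal{F}}\vec{a}$ yields $\vec{c}\in[0,\vec{b}]_{\mathscr{R}_{\vec{a}}(A,\mathcal{F})}$, i.e. $\vec{c}\leq_{\mathcal{F}}\vec{b}$, with $[0,\vec{c}]$ contained in or disjoint from that clopen set; unwinding the definitions, $[0,\vec{c}]=\{\,\vec{d}\mid\vec{d}\leq_{\mathcal{F}}\vec{c}\,\}$ and membership in the clopen set is just ``$\vec{d}(0)\in X$'', so this says exactly that $\FR_{\mathcal{F}}(\vec{c})=\{\,\vec{d}(0)\mid\vec{d}\leq_{\mathcal{F}}\vec{c}\,\}$ is contained in or disjoint from $X$, which is homogeneity.

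For the converse — if $(A,\mathcal{F})$ is Ramsey below $\vec{a}$, then $\mathscr{R}_{\vec{a}}(A,\mathcal{F})$ is a Ramsey space — I would reduce to Carlson's Lemma~4.14 in \cite{tC88} rather than reprove it from scratch, exactly as the proof of Theorem~\ref{160713} does. Carlson's abstract Ellentuck theorem gives sufficient combinatorial conditions (amalgamation / Ramsey-type axioms) for a preorder with approximations to be a Ramsey space, and Lemma~4.14 in \cite{tC88} identifies, for the algebra-induced preorders, that these axioms hold precisely when the relevant Ramsey property of finite reductions holds. The work is to verify that the combinatorial hypotheses of that lemma, interpreted for the \emph{subpreorder} $\mathscr{R}_{\vec{a}}(A,\mathcal{F})=(\{\,\vec{b}\mid\vec{b}\leq_{\mathcal{F}}\vec{a}\,\},\leq_{\mathcal{F}})$, are implied by ``$(A,\mathcal{F})$ is Ramsey below $\vec{a}$''. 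Here the assumption that $\mathcal{F}$ is finite with no unary operations is what makes the approximation structure well-behaved (finitely many reductions of each length, and no collapse from unary maps), just as in Theorem~\ref{160713}.

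An alternative, possibly cleaner route is to deduce Theorem~\ref{1129b} directly from Theorem~\ref{160713} applied to a modified algebra. One would like to realize ``Ramsey below $\vec{a}$'' as ``Ramsey'' for some auxiliary structure whose $\omega$-sequences are exactly the reductions of $\vec{a}$; however, the set $\{\,\vec{b}\mid\vec{b}\leq_{\mathcal{F}}\vec{a}\,\}$ is not of the form ${^\omega}\!A'$ for a subalgebra $A'$ in general, so this does not go through verbatim, and I would fall back on citing Lemma~4.14. A third bookkeeping remark: by Remark~\ref{0102a}, the collection of statements ``$\mathscr{R}_{\vec{a}}(A,\mathcal{F})$ is a Ramsey space for all $\vec{a}$'' should be compatible with Theorem~\ref{160713} via Proposition~\ref{1404a} (taking $\vec{b}=\vec{a}$), which is a useful consistency check on the statement.

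The main obstacle I anticipate is not the forward direction — that is a routine clopen-set argument — but rather pinning down that the hypotheses of Carlson's Lemma~4.14 localize correctly to $\mathscr{R}_{\vec{a}}(A,\mathcal{F})$. One must confirm that the approximation maps and the amalgamation axiom in \cite{tC88} are stated (or trivially re-derivable) relative to an arbitrary fixed top element $\vec{a}$, and that the hypothesis ``Ramsey below $\vec{a}$'' is exactly the restricted combinatorial condition Carlson's lemma requires, with the finiteness and non-unarity of $\mathcal{F}$ playing the role they play in the unlocalized case. Since the paper states Theorem~\ref{1129b} without proof, presumably this verification is considered a direct analogue of the proof of Theorem~\ref{160713}, and the intended proof is simply the one sentence: \emph{this is the localization of Lemma~4.14 in \cite{tC88}, proved exactly as Theorem~\ref{160713}, relativized to} $\vec{a}$.
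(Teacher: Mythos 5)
Your proposal matches the paper's intent: the paper states Theorem~\ref{1129b} without proof, indicating only that it ``can be shown'' analogously to Theorem~\ref{160713}, i.e.\ by localizing Lemma~4.14 of \cite{tC88} to the subpreorder $\mathscr{R}_{\vec{a}}(A,\mathcal{F})$, which is exactly the route you take (your forward direction via the clopen sets $\{\,\vec{c}\mid \vec{c}(0)\in X\,\}$ is the standard easy half, as in Remark~\ref{130813a}). So your approach is essentially the same as the paper's, with the honest caveat---which you yourself flag---that the real content lives in verifying that Carlson's combinatorial hypotheses relativize to a fixed top element $\vec{a}$, a verification the paper also omits.
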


The following theorem says that the collection of operations in Theorem~\ref{160713} can actually be expanded to include arbitrary collection of unary operations.

\begin{theorem}\label{160713a}
Suppose $\mathcal{F}$ is a finite collection of non-unary operations on a set $A$ and $\mathcal{G}$ is any collection of unary operations on $A$.
Then $\mathscr{R}(A, \mathcal{F} \cup \mathcal{G})$ is a Ramsey space if and only if $(A, \mathcal{F}\cup \mathcal{G})$ is  a Ramsey algebra.
\end{theorem}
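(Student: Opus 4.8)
The plan is to reduce Theorem~\ref{160713a} to the non-unary case Theorem~\ref{160713} by ``absorbing'' the unary operations, exactly as Theorem~\ref{0928b} absorbs them at the level of Ramsey algebras. The key observation is that the content of Theorem~\ref{0928b}, specialized to a \emph{finite} $\mathcal{F}$ of non-unary operations (here playing the role of $\mathcal{G}$ in that theorem) and arbitrary unary $\mathcal{G}$ (playing the role of $\mathcal{F}$ there), is that $(A,\mathcal{F}\cup\mathcal{G})$ is a Ramsey algebra precisely when every sequence has a reduction $\vec{b}$ with $\FR_{\mathcal{F}}(\vec{b})\subseteq S$ such that $(A,\mathcal{F})$ is Ramsey below $\vec{b}$, where $S=\{\,a\in A\mid g(a)=a\text{ for all }g\in\mathcal{G}\,\}$. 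Below such a $\vec{b}$ the unary operations act trivially on all finite reductions, so $\leq_{\mathcal{F}\cup\mathcal{G}}$ and $\leq_{\mathcal{F}}$ and their associated topologies ought to coincide below $\vec{b}$, and then Theorem~\ref{1129b} converts ``$(A,\mathcal{F})$ Ramsey below $\vec{b}$'' into ``$\mathscr{R}_{\vec{b}}(A,\mathcal{F})$ is a Ramsey space,'' after which Proposition~\ref{1404a} assembles the pieces into ``$\mathscr{R}(A,\mathcal{F}\cup\mathcal{G})$ is a Ramsey space.''

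Concretely, I would proceed as follows. The forward direction is Remark~\ref{130813a}. For the converse, assume $(A,\mathcal{F}\cup\mathcal{G})$ is a Ramsey algebra. By Theorem~\ref{0928b}(1)$\Rightarrow$(2) (with the role reversal above), for every $\vec{a}\in{^\omega}\!A$ there is $\vec{b}\leq_{\mathcal{F}\cup\mathcal{G}}\vec{a}$ with $\FR_{\mathcal{F}}(\vec{b})\subseteq S$ such that $(A,\mathcal{F})$ is Ramsey below $\vec{b}$. First I would record the elementary fact that whenever $\FR_{\mathcal{F}}(\vec{b})\subseteq S$, one has $\vec{c}\leq_{\mathcal{F}\cup\mathcal{G}}\vec{b}$ if and only if $\vec{c}\leq_{\mathcal{F}}\vec{b}$, and moreover for such $\vec{c}$ also $\FR_{\mathcal{F}}(\vec{c})\subseteq S$; this is because any application of a unary operation from $\mathcal{G}$ to an element of $S$ (or to a finite reduction built from elements of $S$, using that $S$ is closed under $\mathcal{F}$ since $\mathcal{F}$-operations commute with the fixed-point condition only after one checks it, so really one argues on orderly terms) returns the same element, so every orderly term over $\mathcal{F}\cup\mathcal{G}$ evaluated on a subsequence drawn from $\FR$-values in $S$ agrees with an orderly term over $\mathcal{F}$. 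Consequently the preorders with approximations $\mathscr{R}_{\vec{b}}(A,\mathcal{F}\cup\mathcal{G})$ and $\mathscr{R}_{\vec{b}}(A,\mathcal{F})$ have the same underlying set and the same relation, hence literally the same natural topology.

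Next, since $(A,\mathcal{F})$ is Ramsey below $\vec{b}$ and $\mathcal{F}$ is finite and non-unary, Theorem~\ref{1129b} gives that $\mathscr{R}_{\vec{b}}(A,\mathcal{F})$ is a Ramsey space; by the identification above, $\mathscr{R}_{\vec{b}}(A,\mathcal{F}\cup\mathcal{G})$ is a Ramsey space. Thus for every $\vec{a}\in{^\omega}\!A$ there exists $\vec{b}\leq_{\mathcal{F}\cup\mathcal{G}}\vec{a}$ with $\mathscr{R}_{\vec{b}}(A,\mathcal{F}\cup\mathcal{G})$ a Ramsey space, and Proposition~\ref{1404a} (applied to the algebra $(A,\mathcal{F}\cup\mathcal{G})$) yields that $\mathscr{R}(A,\mathcal{F}\cup\mathcal{G})$ is a Ramsey space, completing the proof.

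The main obstacle I anticipate is the bookkeeping lemma asserting that $\leq_{\mathcal{F}\cup\mathcal{G}}$ and $\leq_{\mathcal{F}}$ coincide below a $\vec{b}$ with $\FR_{\mathcal{F}}(\vec{b})\subseteq S$, together with the attendant claim that this equality of preorders gives an honest equality (not merely homeomorphism) of the natural topologies on $\mathscr{R}_{\vec{b}}(A,\mathcal{F})$ and $\mathscr{R}_{\vec{b}}(A,\mathcal{F}\cup\mathcal{G})$. The subtlety is that an orderly term over $\mathcal{F}\cup\mathcal{G}$ may interleave unary operations from $\mathcal{G}$ with non-unary ones from $\mathcal{F}$ in a deeply nested way; one must check by induction on the generation of orderly terms that, when all the input entries lie in $\FR_{\mathcal{F}}(\vec{b})\subseteq S$ and every intermediate value stays in $S$, each such term collapses to an orderly term over $\mathcal{F}$ alone. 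This requires first establishing that $S$ is closed under every operation in $\mathcal{F}$ when restricted to arguments coming from a reduction of $\vec{b}$ — which in turn is exactly the force of $\FR_{\mathcal{F}}(\vec{b})\subseteq S$ — and then that the $\mathcal{G}$-operations are identities on $S$ by definition. Once this is in hand, everything else is an invocation of results already in the paper.
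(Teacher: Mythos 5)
Your proposal is correct and follows essentially the same route as the paper's own proof: forward direction via Remark~\ref{130813a}, and for the converse an application of Theorem~\ref{0928b} with the roles of $\mathcal{F}$ and $\mathcal{G}$ reversed, the observation that $\mathscr{R}_{\vec{b}}(A,\mathcal{F}\cup\mathcal{G})$ and $\mathscr{R}_{\vec{b}}(A,\mathcal{F})$ coincide as preorders with approximations when $\FR_{\mathcal{F}}(\vec{b})\subseteq S$, then Theorem~\ref{1129b} and Proposition~\ref{1404a}. The ``bookkeeping lemma'' you flag as the main obstacle is exactly the step the paper asserts without detailed proof, and your sketch of how to handle it (induction on orderly terms, using that all finite reductions stay in $S$ where the unary operations act as the identity) is the right one.
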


\begin{proof}
($\Rightarrow$) This is immediate by Remark \ref{130813a}.

($\Leftarrow$) Fix $\vec{a} \in {^\omega}\!A$. By Theorem \ref{0928b} (beware of the opposite roles of $\mathcal{F}$ and $\mathcal{G}$), choose $\vec{b} \leq_{\mathcal{F} \cup \mathcal{G}} \vec{a}$ with
 $\FR_{ \mathcal{F}}(\vec{b})\subseteq S$, where $S= \{\,a \in A\mid g(a)=a \text{ \textnormal{for all} } g \in \mathcal{G}\,\}$ such that $(A, \mathcal{F})$ is Ramsey below $\vec{b}$. By Theorem \ref{1129b}, 
 $\mathscr{R}_{\vec{b}}(A,  \mathcal{F})$ is a Ramsey space. 
 Since $\FR_{ \mathcal{F}}(\vec{b})\subseteq S$, the set $\{\, \vec{c}\in {^\omega}\!A\mid \vec{c} \leq_{\mathcal{F} \cup \mathcal{G}}\vec{b}\,\}$ is equal to
  $\{\, \vec{c}\in {^\omega}\!A\mid \vec{c} \leq_{\mathcal{F}}\vec{b}\,\}$, and the restrictions of $\leq_{\mathcal{F} \cup \mathcal{G}     }$ and  $\leq_{\mathcal{F}} $ to it coincide. Therefore, $\mathscr{R}_{\vec{b}}(A, \mathcal{F}\cup  \mathcal{G})$ and $\mathscr{R}_{\vec{b}}(A,  \mathcal{F})$ are in fact the same preorder with approximations. Thus $\mathscr{R}_{\vec{b}}(A,  \mathcal{F}\cup \mathcal{G})$ is a Ramsey space. Since $\vec{a}$ is arbitrary, by Proposition~\ref{1404a}, we conclude that $\mathscr{R}(A, \mathcal{F} \cup \mathcal{G})$ is a Ramsey space.
\end{proof}


An analogous notion of reduction  can be defined between finite sequences. The restriction on $\mathcal{F}$ in Theorem~\ref{160713} ensures that
every finite sequence has at most finitely many reductions, thus 
fulfilling one of the criteria to apply the abstract Ellentuck's theorem on $\mathscr{R} (A,\mathcal{F})$.  
We do not know of an example of a Ramsey algebra $(A,\mathcal{F})$ such that $\mathscr{R}(A, \mathcal{F})$ is not a Ramsey space. One may start out with a semigroup and try to expand the algebra by adding some unary functions (thus violating that particular criterion), hoping that the expanded algebra is still Ramsey but the corresponding space is not. Theorem~\ref{160713a} ensures that this attempt will not succeed.

\section{Open Problems}

Very little has been studied about Ramsey algebras. 
Suppose $f\colon \mathbb{N}^2\times \mathbb{N}^2 \rightarrow \mathbb{N}^2$ is defined by $f((w,x),(y,z))=( w+x,y+z)$.
Then $(\mathbb{N}^2, f)$ is a Ramsey algebra \cite{wcT13c} but $f$ is nowhere associative.\footnote{A binary operation $f$ on $A$ is \emph{nowhere associative} if{f}
$f(f(a,b),c)\neq f(a, f(b,c))$ for all $a,b,c\in A$.}
Hence, the class of Ramsey algebras, even for the case of groupoids (i.e.~with a single binary operation),  appears to be much richer than the class of semigroups.
Furthermore, whether the Cartesian product of two Ramsey groupoids is Ramsey remains elusive.
Therefore, it is fascinating how operations  can mingle together to form a Ramsey algebra, for example, Ramsey algebras of variable words.

Assuming Martin's Axiom, this author \cite{wcT13b} has shown that   every nondegenerate Ramsey algebra has a nonprincipal strongly reductible ultrafilter, analogous to the existence of strongly summable ultrafilters under Martin's Axiom \cite{nH87}. Strongly reductible ultrafilters are necessarily ``idempotent" ultrafilters. 
Carlson asked whether the existence of idempotent ultrafilters for a Ramsey algebra can be proven in $\mathsf{ZFC}$.

Finally, Hindman \cite{nH79a} showed that no ultrafilter on $\mathbb{N}$ is idempotent for addition and multiplication simultaneously. This implies that the integral domain $(\mathbb{Z},+,\cdot)$ does not have idempotent ultrafilters. We ask whether this Hindman's result can be generalized to every infinite integral domain.
A negative answer cannot be deduced from Theorem~\ref{0814} because an algebra with idempotent ultrafilters need not be a Ramsey algebra.

\section*{Acknowledgements}

This paper grows out of the author's thesis submitted as a partial fulfilment for the award of PhD to The Ohio State University under the supervision of Timothy Carlson. 
I owe him a debt of gratitude for introducing the notion of Ramsey algebra. The use of monotone monomials in section \ref{190813} was also suggested by him. Furthermore, the author would like to take this opportunity to express deep appreciation towards his great and supportive mentorship. 
Finally, the author gratefully acknowledge support by an FRGS grant No. 203/PMATHS/6711464 of the Ministry of Education, Malaysia and Universiti Sains Malaysia.


\section*{Conflict of Interests}

The author declares that there is no conflict of interests
regarding the publication of this article.

\thebibliography{99}

\bibitem{tC88} T.J.~Carlson, Some unifying principles in Ramsey theory,  Discrete Mathematics~68 (1988), 117--169.
\bibitem{CS84} T.J.~Carlson and S.~G.~Simpson, A dual form of Ramsey's theorem, Adv. in Math.~53 (1984), 265-290.


\bibitem{wC77} W.~Comfort, Ultrafilters - some old and some new results, Bull. Amer.
Math. Soc.~83 (1977), 417-455.

\bibitem{eE74} E.~Ellentuck, A new proof that analytic sets are Ramsey, J. Symb. Logic~39 (1974), 163-165.



\bibitem{GP73} F.~Galvin and K.~Prickry, Borel sets and Ramsey's theorem, J. Symb. Logic~38 (1973), 193-198.


\bibitem{HJ63} A.W.~Hales and R.I.~Jewett, Regularity and positional games, Trans. Amer. Math. Soc.~124 (1966), 360-367.

\bibitem{nH74} N.~Hindman, Finite sums from sequences within cells of a partition of $\mathbb{N}$, J. Combin. Theory (Series A)~17 (1974), 1-11.

\bibitem{nH79a} N.~Hindman, Simultaneous idempotents in $\beta\mathbb{N}\backslash \mathbb{N}$ and finite sums and products in $\mathbb{N}$, Proc. Amer. Math. Soc.~77 (1979), 150-154.

\bibitem{nH79} N.~Hindman, Ultrafilters and combinatorial number theory, in Number Theory Carbondale, M.~Nathanson ed., Lecture Notes in Math.~751 (1979), 119-184.

\bibitem{nH87} N.~Hindman, Summable ultrafilters and finite sums, in Logic and Combinatorics, S.~Simpson ed., Contemp. Math.~65 (1987), 263-274.


\bibitem{HS98} N.~Hindman and D.~Strauss, Algebra in the Stone-\v{C}ech Compactification: Theory and Applications, Second Edition, Walter de Gruyter, Berlin (2012).

\bibitem{mK67} M.~Katet\v{o}v, A theorem on mappings, Comment. Math. Univ. Carolin.~8 (1967), 431-433.










\bibitem{wcT13c} W.C.~Teh, Ramsey algebras and Ramsey spaces, Ph.D. Dissertation (2013), The Ohio-State University.

\bibitem{wcT13b} W.C.~Teh, Ramsey algebras and strongly reductible ultrafilters, Bull. Malays. Math. Sci. Soc.~37(4) (2014), 931-938.

\bibitem{wcT13a} W.C.~Teh, Ramsey algebras and formal orderly terms, Notre Dame J. Form. Log. (to appear).

\bibitem{sT10} S.~Todorcevic, Introduction to Ramsey Spaces, Princeton University Press, 2010.

\end{document}